\newtheorem{theorem}{Theorem}[section]
\newtheorem{corollary}{Corollary}[theorem]
\newtheorem{proposition}[theorem]{Proposition}
\newcommand{\KK}{{\mathsf{K_h}}}
\newcommand{\AAA}{{\mathsf{A}}}
\newcommand{\AAd}{{\mathsf{A}^\dagger}}
\newcommand{\NAT}{{\mathcal{N}(\AAA)^\bot}}
\newcommand{\JJ}{\mathcal{J}}
\newcommand{\WW}{{\mathsf{W}}}
\newcommand{\PP}{\mathsf{P}}
\newcommand{\II}{{\mathsf{I}}}
\newcommand{\xx}{{\mathbf{x}}}
\newcommand{\yy}{{\mathbf{y}}}
\newcommand{\zz}{{\mathbf{z}}}
\newcommand{\vv}{{\mathbf{v}}}
\newcommand{\bb}{{\mathbf{b}}}
\newcommand{\cc}{{\mathbf{c}}}
\newcommand{\dd}{{\mathbf{d}}}
\newcommand{\ttau}{{\bm{\tau}}}
\newcommand{\qq}{{\mathbf{q}}}
\newcommand{\ee}{{\mathbf{e}}}
\newcommand{\nn}{{\mathbf{n}}}
\newcommand{\Rn}{{\mathbb{R}^n}}
\newcommand{\Rnn}{{\mathbb{R}^{n\times n}}}
\newcommand{\Rmn}{{\mathbb{R}^{m\times n}}}
\newcommand{\nullspace}[1]{{\mathcal{N}(#1)}}
\DeclareMathOperator*{\argmin}{arg\,min}
\DeclareMathOperator*{\argmax}{arg\,max}
\title{Box constraints and weighted sparsity regularization for identifying sources in elliptic PDEs}
\author{Ole L{\o}seth Elvetun\thanks{Faculty of Science and Technology, Norwegian University of Life Sciences, P.O. Box 5003, NO-1432 {\AA}s, Norway. Email: ole.elvetun@nmbu.no.} and Bj{\o}rn Fredrik Nielsen\thanks{Faculty of Science and Technology, Norwegian University of Life Sciences, P.O. Box 5003, NO-1432 {\AA}s, Norway. Email: bjorn.f.nielsen@nmbu.no.}}
\begin{document}

\maketitle

\begin{abstract}
  We explore the possibility for using boundary data to identify sources in elliptic PDEs. Even though the associated forward operator has a large null space, it turns out that box constraints, combined with weighted sparsity regularization, can enable rather accurate recovery of sources with constant magnitude/strength. In addition, for sources with varying strength, the support of the inverse solution 
  will be a subset of the support of the true source.  We present both an analysis of the problem and a series of numerical experiments. Our work only addresses discretized 
  problems. 
  
  The reason for introducing the weighting procedure is that standard (unweighted) sparsity regularization fails to provide adequate results for the source identification task considered in this paper. 
  This investigation is also motivated by applications, e.g., recovering mass distributions from measurements of  gravitational fields and 
  inverse scattering. We develop the methodology and the analysis in terms of Euclidean spaces, and our results can therefore be applied to many problems. For example, the results 
  are equally applicable to models involving the screened Poisson equation as to models using the  Helmholtz equation, with both large and small wave numbers. 
\end{abstract}

\noindent {\bf Keywords:}
Inverse source problems, null space, sparsity regularization, box constraints, PDE-constrained optimization.

\section{Introduction}
We study the task of identifying the source $f$ in an elliptic PDE from boundary data: 
    \begin{equation}\label{eq1}
        \min_{(f,u) \in F_h \times H^1(\Omega), \, 0 \leq f \leq s} \left\{ \frac{1}{2}\|u - d\|_{L^2(\partial\Omega)}^2 + \alpha \sum_i w_i |(f, \phi_i)_{L^2(\Omega)}| \right\}
    \end{equation}
    subject to 
    \begin{equation}\label{eq2}
    \begin{split}
        -\Delta u + \epsilon u &= f \quad \mbox{in } \Omega, \\
        \frac{\partial u}{\partial \nn} &= 0  \quad \mbox{on } \partial \Omega, 
    \end{split}
    \end{equation} 
where $\Omega \subset \mathbb{R}^\nu$ is a Lipschitz domain with boundary $\partial \Omega$, $\{ \phi_1, \phi_2, \ldots, \phi_n \}$ is an $L^2$-orthonormal basis for the finite dimensional function space $F_h$, $\alpha > 0$ is a regularization parameter, $d$ represents Dirichlet boundary data and $\nu=1,2$ or $3$. In this paper we use the same positive weights $\{ w_i \}_{i=1}^n$ in the $\ell^1$-regularization term in \eqref{eq1} as in \cite{Elv21,Elv21b}. Their precise form is presented in the next section. The upper bound $s$ in \eqref{eq1} for the strength of the source function $f$ will be either finite or infinite, i.e., $s \in (0, \infty]$, and we allow the constant $\epsilon$ to be either positive or negative and of arbitrary fixed size: $$\epsilon = -\kappa^2 \mbox{ or } \epsilon = \kappa^2.$$ This means that our results hold for both the screened Poisson equation and the Helmholtz equation. 

Regarding the basis $\{ \phi_1, \phi_2, \ldots, \phi_n \}$ for the source space $F_h$, we mention that one can, e.g., for $i=1,2,\ldots,n$, choose $\phi_i$ to be the normalized characteristic function of the $i$'th cell/subdomain in a mesh for $\Omega$. Provided that the measure of the overlaps between different cells is zero, this yields an $L^2$-orthonormal basis.     

Variants of \eqref{eq1}-\eqref{eq2} occur in a number of applications. For example, in connection with  inverse scattering problems \cite[section 1.7]{BEng96}. It has therefore been studied by many mathematicians, and a number of methods for computing reliable results have been published. 

Often one assumes that $f$ consists of a finite number of point sources or sources having small compact support, see, e.g., \cite{Abdel15,babda09,Bad00,Han11,Zha19}. This leads to involved mathematical issues, but sometimes optimization procedures or explicit regularization can be avoided. Also, such "direct methods" can potentially be used to identify more general sources \cite{Wan17,Zha18} when $\epsilon < 0$, i.e.,  for the Helmholtz equation with multi-frequency data.   

In \cite{cheng15,song12} the authors suggest to employ a Kohn-Vogelius fidelity term and a restricted control domain to identify the source. A similar approach is considered in \cite{hinze19}, but instead of limiting the control domain, a suitable prior is used.  

Assuming that $f$ has the form $f(x)=\rho_1, \, \rho_2$ in $\Omega_1, \, \Omega_2$, respectively, where $\rho_1$ and $\rho_2$ are given constants, the authors of \cite{kun94, ring95} explored the possibility of  identify the subdomains $\Omega_1$ and $\Omega_2$. These investigations are motivated by, e.g., the task of determining the mass distribution, i.e. the core-mantle boundary, of the earth from measurements of the gravitational field. 

If the boundary potential associated with two constant strength sources with  star-shaped supports\footnote{Star-shaped with respect to their centers of gravity.} coincide, then these two sources must be identical, see \cite[Theorem 4.1.1]{BIsa05} which contains an analysis of this when $\epsilon =0$ (Poisson's equation).  
%
%
In \cite{Ike99} a more general case is considered for the Helmholtz equation, and the author of that paper shows that it is possible to recover the {\em support function}
of a polygonal shaped support, see also \cite{Ikehata_2021}.  Furthermore, during the last three decades very advanced investigations have been presented to further clarify the possibilities for computing the support of sources (or scattering objects) from boundary (or far field) data, see, e.g., \cite{Griesmaier_2012,Gri13,Hanke_2012,Hanke_2008,Han11,Het96,Kress_2013,Kusiak_2003,Kusiak_2005,Potthast_2003}. Several of these papers address the important task of computing the so-called {\em convex source (or scattering)  support}. 

It turns out that the use of box constraints and $\ell^1$-regularization in \eqref{eq1}-\eqref{eq2} enable the recovery of several point/small sources as well as the identification of rather large composite sources with constant strength, provided that some fairly mild assumptions hold. Both of these cases can be handled with the same methodology. We also present results which make it possible to estimate the constant strength of $f$, as well as the support of sources with varying strength. Since software for sparsity regularization is readily available, our scheme can be implemented in a rather straightforward manner using standard PDE solvers.  

Our approach is developed in terms of Euclidean spaces, and it can therefore be applied to a rather large class of problems. This, e.g., implies that we do not need to impose limiting restrictions on the domain $\Omega$ and that the methodology can handle state equations involving  conductivity functions or tensors, i.e., that the term $-\Delta u$ in \eqref{eq2} instead has the form $-\nabla \cdot (\sigma \nabla u)$. 

The mathematical analysis is presented in the next section: Both the basis pursuit version and the regularized form of the problem are explored. Section \ref{sec:numerical_experiments} contains the numerical experiments, including tests with noisy data, and we close the paper with some remarks in section \ref{sec:concluding_remarks}. 

\section{Analysis}
Let 
\begin{equation*} 
    K_h:f \mapsto u|_{\partial \Omega}
\end{equation*}
be the forward operator associated with \eqref{eq1}-\eqref{eq2}, where $u=u(f)$ denotes the solution of the state equations \eqref{eq2} for a given $f$. Then we may express our source identification problem in the form 
\begin{equation} \label{eq3}
    \min_{f\in F_h, \, 0 \leq f \leq s} \left\{ \frac{1}{2}\|K_h f - d\|_{L^2(\partial\Omega)}^2 + \alpha \sum_i w_i |(f, \phi_i)_{L^2(\Omega)}| \right\}. 
\end{equation}

Upon discretization of the boundary value problem \eqref{eq2}, we get the transfer/forward matrix $\AAA \in \Rmn$, and we obtain the following fully discrete version of \eqref{eq3} 
\begin{equation} \label{eq:3.1}
    \min_{0 \leq \xx \leq s} \left\{ \frac{1}{2}\|\AAA \xx - \bb\|_2^2 + \alpha \sum_i w_i |x_i| \right\}. 
\end{equation}
Here, $\xx$ represents the Euclidean vector of $f$ relative to the $L^2$-orthonormal basis $\{ \phi_1, \phi_2, \ldots, \phi_n \}$ of $F_h$, i.e., 
\begin{equation} \label{eq:5.1}
    f = \sum_i x_i \phi_i.
\end{equation}
As mentioned above, for the problem \eqref{eq1}-\eqref{eq2} one can, e.g., choose 
\begin{equation} \label{eq:5.2}
    \phi_i = \| \chi_{\Omega_i} \|_{L^2(\Omega)}^{-1} \, \chi_{\Omega_i}, 
\end{equation}
where $\chi_{\Omega_i}$ denotes the characteristic function of the $i$'th cell $\Omega_i$ in a mesh for $\Omega$. 

Let 
\begin{equation}\label{eq:5.21} 
\WW = \mbox{diag}(w_1,w_2,\ldots,w_n) 
\end{equation}
denote the diagonal matrix with diagonal entries $\{ w_i \}$. Then the associated basis pursuit (zero regularization limit) problem to \eqref{eq:3.1} reads 
\begin{equation} \label{eq5}
      \min_{0 \leq \xx \leq s} \| \WW \xx \|_1 \quad \textnormal{subject to} \quad \AAA \xx = \bb. 
\end{equation}

Since only boundary data is available for the recovery of $f$, the operator $K_h$ and the matrix $\AAA$ will have nontrivial null spaces. This differs significantly from the typical setup for optimal control problems, which often assumes observations of the state $u$ throughout the entire domain $\Omega$. Nevertheless, the nature of many important inverse problems is such that only very limited observation data is available.   

Our work is motivated by the inverse problems mentioned above. But for the sake of generality, we will develop the methodology and analysis in terms of Euclidean spaces. It should also be mentioned that it remains an open problem how to establish an infinite dimensional counterpart to the results presented in this paper. 

\subsection{Diagonal weight/regularization matrix $\WW$} \label{subsec:weight_matrix}
Assume that $\AAA \in \Rmn$ has a nontrivial null space $\nullspace{\AAA}$ and consider the equation
\begin{equation}\label{eq:1}
    \AAA\xx = \bb.
\end{equation}
We can multiply \eqref{eq:1} with the pseudo-inverse $\AAd$ of $\AAA$ to obtain
\begin{equation}\label{eq:2}
 \AAd\AAA\xx = \AAd\bb,
\end{equation}
where we recall that $\AAd\AAA$ is the orthogonal projection
\begin{equation}\label{eq:3}
    \PP = \AAd\AAA : \Rn \rightarrow \NAT. 
\end{equation}

The authors of \cite{Elv21} introduced the diagonal weight/regularization matrix $\WW \in \Rnn$ defined by\footnote{In section 3 in \cite{Elv21}, $\WW$ is defined in terms of two abstract finite dimensional inner product space $X$ and $Y$ and a linear operator $\KK: X \rightarrow Y$. In this paper we use the "Euclidean version" of this.} 
\begin{equation}\label{eq:4}
    \WW \ee_i := \underbrace{\|\PP\ee_i\|_2}_{:=w_i}\ee_i, \quad i=1,2,\ldots, n, 
\end{equation}
where $\ee_1, \ee_2, \ldots, \ee_n$ are the standard unit basis vectors. Throughout this paper we assume that none of the standard unit basis vectors belong to the null space $\nullspace{\AAA}$ of $\AAA$, which implies that 
\begin{equation}
    w_i=\|\PP\ee_i\|_2 \neq 0 , \quad i= 1,2,\ldots, n, 
\end{equation}
and $\WW$ becomes invertible since it is diagonal \eqref{eq:5.21}. 

Some of the mathematical properties of $\WW$ are explored in \cite{Elv21,Elv21b}. In order to motivate the use of this type of weighting procedure, 
we mention the following: Let $\xx^\dagger$ be the minimum $\| \cdot \|_2$-norm solution of 
\begin{equation} \label{eq:4.1}
    \AAA \xx = \AAA \ee_j. 
\end{equation}
Then, see Theorem 2.1 in \cite{Elv21b} and Theorem 4.2 in \cite{Elv21}, 
 \begin{align} \nonumber 
     j &= \argmax_{i \in \{1,2,...,n\}} |[\WW^{-1}\xx^\dagger]_i| \\
     \label{eq:max_property}
     &= \argmax_{i \in \{1,2,...,n\}} |[\WW^{-1}\AAA^\dagger \AAA \ee_j]_i|, 
 \end{align}
 where $[\mathbf{v}]_i$ represents the $i$'th component of the Euclidean vector $\mathbf{v}$. This means that $\WW^{-1}\xx^\dagger$ can be used to recover the correct index $j$ from the "data" $\bb^\dagger = \AAA \ee_j$ in \eqref{eq:4.1}.  
 


\subsubsection*{Remarks} 
In order to obtain a "sound" regularization operator $\WW$, as defined in \eqref{eq:4}, one should ensure that $\min_i \{ \| \PP \ee_i \|_2 \}$ is not too small. This follows from the observation that, if $\| \PP \ee_i \|_2 \approx 0$, then one tries to recover the contribution associated with a basis vector $\ee_i$ which is almost in the null space of the matrix $\AAA$. Thus, in order to avoid to get "very close" to the null space, one would typically use a rather moderate number $n$ of basis functions $\phi_1, \phi_2, \ldots, \phi_n$ to discretize the source $f$ in \eqref{eq1}-\eqref{eq2}, 
see \eqref{eq:5.1} and \eqref{eq:5.2}. This is our motivation for employing a finite dimensional space for $f$ in \eqref{eq1}-\eqref{eq2}.

The computation of $\PP \ee_i = \AAA^\dagger \AAA \ee_i$ involves the pseudoinverse/Moore-Penrose inverse $\AAA^\dagger$ of $\AAA$. If $\AAA$ has small nonzero singular values, which will typically be the case for a discretized ill posed problem, then it is not advisable to apply $\AAA^\dagger$ in computations, and we must "replace" $\AAA^\dagger$ by a more well-behaved matrix. This can, e.g., be accomplished by employing truncated SVD or by applying Tikhonov regularization, with regularization parameter $\gamma$, 
\begin{equation*}
    (\AAA^T \AAA  + \gamma \II) \xx_\gamma = \AAA^T \AAA \ee_i. 
\end{equation*}
Then, according to standard theory, $\lim_{\gamma \rightarrow 0^+} \xx_\gamma = \AAA^\dagger \AAA \ee_i = \PP \ee_i$. In our numerical experiments (section \ref{sec:numerical_experiments}) we used truncated SVD 
to compute an approximation of the entries in the diagonal regularization matrix $\WW$. 

\subsection{Basis pursuit} 
We will now investigate whether one can use\footnote{If $\AAA$ has very small nonzero singular values, then it is in practise not possible to solve \eqref{eq5}, and one must instead consider a regularized version of the problem. The latter is discussed in the next subsection.} \eqref{eq5} to approximately, or partially, recover a true source 
\begin{equation*}
    \xx^* = \sum_{\JJ} x_j^* \ee_j, \quad \xx^* \geq 0,
\end{equation*}
from the synthetic data $\bb=\bb^\dagger= \AAA \xx^*$, where $\JJ$ is the support of $\xx^*$.

Our first result shows that requiring positivity may enable the identification of the support of the sources.
\begin{theorem} \label{lower_constraint}
Let $\JJ = \mathrm{supp} (\xx^*)$ 
and assume that there exists a vector $\cc$ such that 
\begin{align}
\label{IC1}
\frac{\PP \ee_j}{\| \PP \ee_j \|_2} \cdot \cc &= 1 \quad \forall j \in \JJ, \\
\label{IC2}
\frac{\PP \ee_i}{\| \PP \ee_i \|_2} \cdot \cc &< 1 \quad \forall i \in \JJ^c. 
\end{align}
Then any solution $\yy$ of 
\begin{equation}
\label{IC3}
      \min_{0 \leq \xx \leq s} \| \WW \xx \|_1 \quad \textnormal{subject to} \quad \AAA \xx = \AAA \overbrace{\sum_{\JJ} x_j^* \ee_j}^{= \xx^*}
\end{equation}
satisfies 
\[
\mathrm{supp} (\yy) \subseteq \mathrm{supp} (\xx^*), 
\]
provided that $\xx^* \geq 0$ and that $s \in [\| \xx^* \|_\infty, \infty]$. 
Also, $\xx^*$ is a solution of \eqref{IC3}. 
\end{theorem}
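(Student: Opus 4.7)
The plan is to treat $\cc$ as a dual certificate for the weighted $\ell^1$ basis pursuit problem \eqref{IC3}. The key translation is the pseudoinverse identity $\PP = \AAd \AAA$ from \eqref{eq:3}, which lets me rewrite the affine constraint $\AAA\xx = \AAA\xx^*$ as $\PP\xx = \PP\xx^*$. Combined with $w_i = \|\PP\ee_i\|_2$ and the nonnegativity constraint $\xx \geq 0$, the weighted $\ell^1$ objective collapses to the linear functional $\sum_i \|\PP\ee_i\|_2 \, x_i$, which is ideally suited to being controlled by a linear test vector $\cc$.

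First I would verify that $\xx^*$ is itself feasible for \eqref{IC3}. By hypothesis $\xx^* \geq 0$, and $s \geq \|\xx^*\|_\infty$ gives $\xx^* \leq s$; the equality $\AAA\xx^* = \AAA\xx^*$ is trivial. Hence the optimal value of \eqref{IC3} is at most $\|\WW\xx^*\|_1 = \sum_{j\in\JJ} x_j^* \|\PP\ee_j\|_2$.

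Next I would establish the matching lower bound. Conditions \eqref{IC1}--\eqref{IC2} give $\|\PP\ee_i\|_2 \geq \PP\ee_i \cdot \cc$ for every index $i$, with equality on $\JJ$ and strict inequality on $\JJ^c$. For any feasible $\yy$, multiplying by $y_i \geq 0$ and summing yields
\begin{equation*}
\|\WW\yy\|_1 = \sum_i \|\PP\ee_i\|_2 \, y_i \;\geq\; \sum_i (\PP\ee_i \cdot \cc)\, y_i \;=\; \cc \cdot \PP\yy \;=\; \cc \cdot \PP\xx^* \;=\; \sum_{j\in\JJ} x_j^* \|\PP\ee_j\|_2 \;=\; \|\WW\xx^*\|_1,
\end{equation*}
where the third equality uses feasibility ($\PP\yy = \PP\xx^*$) and the fourth uses equality in \eqref{IC1} on the support $\JJ$. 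This proves that $\xx^*$ is a minimizer.

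The support containment then follows by extracting the slack in the inequality above. If $\yy$ is optimal, then $\sum_i y_i\bigl(\|\PP\ee_i\|_2 - \PP\ee_i \cdot \cc\bigr) = 0$; each summand is nonnegative, so each must vanish individually, and the strict inequality in \eqref{IC2} forces $y_i = 0$ for every $i \in \JJ^c$. The only mild subtlety I expect is book-keeping: the lower-bound argument never invokes the upper box constraint $\xx \leq s$, which enters only to secure feasibility of $\xx^*$, so there is no tension between the box constraint and the certificate. Everything else is routine manipulation of the dual inequality.
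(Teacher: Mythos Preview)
Your proof is correct and follows essentially the same approach as the paper: both use the vector $\cc$ as a dual certificate, pass from $\AAA\yy=\AAA\xx^*$ to $\PP\yy=\PP\xx^*$, take the inner product with $\cc$, and exploit the equality/strict-inequality split in \eqref{IC1}--\eqref{IC2} together with $y_i\geq 0$. The only cosmetic difference is that the paper argues by contradiction (assuming some $y_{i'}>0$ for $i'\in\JJ^c$ and deriving $\|\WW\yy\|_1>\|\WW\xx^*\|_1$), whereas you first establish the lower bound $\|\WW\yy\|_1\geq\|\WW\xx^*\|_1$ directly and then read off the support containment from the vanishing slack; your presentation is arguably a bit more streamlined, but the underlying argument is the same.
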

\subsubsection*{Remark} 
Our geometric intuition for assumptions \eqref{IC1} and \eqref{IC2} is: There exists a direction $\cc$ such that $\PP \ee_j / \| \PP \ee_j \|_2$, $\forall j \in \JJ$, and $\PP \ee_i / \| \PP \ee_i \|_2$, $\forall i \in \JJ^c$, are "equally important" and "less important", respectively, in this direction. We will in subsection \ref{sec:optimality_conditions} explain how \eqref{IC1} and \eqref{IC2} can be derived from the classical KKT-conditions. 
\begin{proof}[Proof of Theorem \ref{lower_constraint}]
Let $$\yy = \sum_{\JJ^c \cup \JJ} y_i \ee_i, \quad y_i \geq 0,$$ denote a solution of \eqref{IC3}. Then 
\[
\AAA \yy = \AAA \sum_{\JJ} x_j^* \ee_j
\]
or, because $\PP = \AAA^\dagger \AAA$,  
\[
\PP \yy = \PP \sum_{\JJ} x_j^* \ee_j. 
\]
Taking the inner product with $\cc$ yields 
\begin{equation*}
\sum_{\JJ^c \cup \JJ} y_i \PP \ee_i \cdot \cc  
= \sum_{\JJ} x_j^* \PP \ee_j \cdot \cc 
\end{equation*}
which can be written in the form, using \eqref{IC1},  
\begin{equation} \label{IC4} 
\sum_{\JJ^c \cup \JJ} y_i \| \PP  \ee_i \|_2 \frac{\PP \ee_i}{\| \PP  \ee_i \|_2} \cdot \cc  
= \sum_{\JJ} x_j^* \| \PP  \ee_j \|_2 .
\end{equation}

Assume now that there is $i' \in \JJ^c$ such that $0 < y_{i'} \leq s$. Then, combining \eqref{IC4} with \eqref{IC1}, \eqref{IC2} and the inequality constraint $y_i \geq 0$ lead to the conclusion that 
\begin{equation} \label{IC4.1}
\sum_{\JJ^c \cup \JJ} y_i \| \PP  \ee_i \|_2 > \sum_{\JJ} x_j^* \| \PP  \ee_j \|_2. 
\end{equation}
That is, recall the definition \eqref{eq:4} of $\WW$,  
\begin{align}
\nonumber
 \| \WW \yy \|_1 &= \| \sum_{\JJ^c \cup \JJ} y_i \WW \ee_i \|_1 \\ 
 \nonumber
 &=\sum_{\JJ^c \cup \JJ} y_i \| \PP  \ee_i \|_2 \\
 \label{IC5}
 &> \sum_{\JJ} x_j^* \| \PP  \ee_j \|_2 \\
 \nonumber
 &= \| \WW \xx^* \|_1 ,
\end{align}
where we have used the assumption that $\xx^* \geq 0$ in the last equality. Consequently, since $\xx^*$ satisfies the constraints in \eqref{IC3}, there can not be any $i' \in \JJ^c$ such that $y_{i'}>0$. We conclude that $\mathrm{supp} (\yy) \subseteq \JJ$.  

When $\mathrm{supp} (\yy) \subseteq \JJ$, we must replace $>$ with $\geq$ in \eqref{IC4.1} and \eqref{IC5}. It follows that 
\[
\| \WW \yy \|_1 \geq \| \WW \xx^* \|_1
\]
for any solution $\yy$ of \eqref{IC3}, and we conclude that $\xx^*$ solves \eqref{IC3}. 
\end{proof}

As an immediate consequence we obtain the following result, where we remark that: Gradually decreasing $s$ in \eqref{IC3} may enforce uniqueness because the size of $\left\{ \zz \, | \, \mathrm{supp} (\zz) \subseteq \JJ \mbox{ and } 0 \leq \zz \leq s \right\}$ will decrease as $s$ decreases. 
\begin{corollary} \label{cor:injective}
Assume that $\xx^* = \sum_{\JJ} x_j^* \ee_j$, $\xx^* \geq 0$, is such that a vector $\cc$ satisfying \eqref{IC1} and \eqref{IC2} exists. If $\AAA$ is injective on $\left\{ \zz \, | \, \mathrm{supp} (\zz) \subseteq \JJ \mbox{ and } 0 \leq \zz \leq s \right\}$, where $s \in \left[ \| \xx^* \|_\infty, \infty \right]$, then $\xx^*$ is the unique solution of \eqref{IC3}. 
\end{corollary}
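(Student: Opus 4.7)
The plan is to chain together Theorem \ref{lower_constraint} with the injectivity hypothesis in a direct way, essentially reducing uniqueness to the fact that two feasible points mapping to the same data must coincide.

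First I would let $\yy$ be an arbitrary solution of \eqref{IC3}. Since the hypotheses of Theorem \ref{lower_constraint} are in force (the vector $\cc$ exists, $\xx^* \geq 0$, and $s \in [\|\xx^*\|_\infty, \infty]$), Theorem \ref{lower_constraint} delivers two facts simultaneously: $\mathrm{supp}(\yy) \subseteq \JJ$ and $\xx^*$ is itself a solution of \eqref{IC3}. In particular, $\yy$ is feasible for \eqref{IC3}, so $0 \leq \yy \leq s$ and $\AAA \yy = \AAA \xx^*$.

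Next I would observe that $\yy$ lies in the set $\{\zz \mid \mathrm{supp}(\zz) \subseteq \JJ \text{ and } 0 \leq \zz \leq s\}$ (by the conclusion of Theorem \ref{lower_constraint} combined with feasibility), and that $\xx^*$ also lies in this set (since $\mathrm{supp}(\xx^*) = \JJ$ and $0 \leq \xx^* \leq \|\xx^*\|_\infty \leq s$). Because $\AAA$ is assumed injective on this set and $\AAA \yy = \AAA \xx^*$, we conclude $\yy = \xx^*$. Since $\yy$ was an arbitrary minimizer, $\xx^*$ is the unique solution.

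There is no real obstacle here; the content of the corollary lies entirely in Theorem \ref{lower_constraint} (which constrains the support of any minimizer to $\JJ$) and in the hypothesis that $\AAA$ separates nonnegative, bounded vectors supported on $\JJ$. The only small thing to verify cleanly is that $\xx^*$ itself satisfies the box constraint, which is immediate from $s \geq \|\xx^*\|_\infty$.
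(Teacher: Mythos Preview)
Your proposal is correct and is precisely the argument the paper has in mind: the paper states the corollary as ``an immediate consequence'' of Theorem \ref{lower_constraint} and does not spell out a proof, so your write-up simply makes explicit the intended two-step reasoning (support containment from Theorem \ref{lower_constraint}, then injectivity on the constrained set forces $\yy=\xx^*$).
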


Recall the definition of the "$\ell^0$-norm"  
$$
\| \vv \|_0 = \# \{i \, | \, v_i \neq 0 \}.
$$ 
Roughly speaking,  the next result explains that, if a minimum "$\ell^0$-norm" solution with support $\JJ$ obeying \eqref{IC1} and \eqref{IC2} exists, then all solutions of the basis pursuit problem must have the same support -- i.e., the support is unique.
\begin{corollary}
Assume that 
\begin{equation*}
    \bb \in \{ \AAA \zz \, | \, \zz\geq 0 \}, 
\end{equation*}
and that there exist
\begin{equation} \label{IC5.001}
    \widetilde{\xx}^* \in \argmin_{\xx \geq 0} \| \xx \|_0 \quad \textnormal{subject to} \quad \AAA \xx = \bb  
\end{equation}
and a vector $\cc$ such that \eqref{IC1} and \eqref{IC2} hold with $\JJ = \textnormal{supp}(\widetilde{\xx}^*)$. 
Then any solution $\yy$ of 
\begin{equation} \label{IC5.01}
    \min_{0 \leq \xx \leq s} \| \WW \xx \|_1 \quad \textnormal{subject to} \quad \AAA \xx = \bb
\end{equation}
also solves the problem 
\begin{equation*}
    \min_{0 \leq \xx \leq s} \| \xx \|_0 \quad \textnormal{subject to} \quad \AAA \xx = \bb 
\end{equation*}
and 
\begin{equation*}
    \mathrm{supp} (\yy) = \mathrm{supp} (\widetilde{\xx}^*).
\end{equation*}
This holds for any $s \in \left[ \| \widetilde{\xx}^* \|_\infty, \infty \right]$.
\end{corollary}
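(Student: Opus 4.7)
The plan is to reduce the corollary directly to Theorem \ref{lower_constraint} applied with $\xx^* = \widetilde{\xx}^*$, and then use the $\ell^0$-minimality of $\widetilde{\xx}^*$ to upgrade the support inclusion to a support equality and an $\ell^0$-optimality statement.

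First I would verify that Theorem \ref{lower_constraint} applies with $\xx^*$ replaced by $\widetilde{\xx}^*$. The feasibility in \eqref{IC5.001} gives $\widetilde{\xx}^* \geq 0$ and $\AAA\widetilde{\xx}^* = \bb$, so the constraint set of \eqref{IC5.01} coincides with that of \eqref{IC3}. The hypotheses \eqref{IC1} and \eqref{IC2} are assumed to hold on $\JJ = \mathrm{supp}(\widetilde{\xx}^*)$, and the requirement $s \in [\|\widetilde{\xx}^*\|_\infty, \infty]$ is part of the statement. Hence Theorem \ref{lower_constraint} yields, for any solution $\yy$ of \eqref{IC5.01}, the inclusion
\[
\mathrm{supp}(\yy) \;\subseteq\; \mathrm{supp}(\widetilde{\xx}^*) \;=\; \JJ.
\]

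Next I would invoke the $\ell^0$-minimality of $\widetilde{\xx}^*$. Since $\yy$ is feasible for \eqref{IC5.001} (it satisfies $\yy \geq 0$ and $\AAA\yy = \bb$), we have $\|\yy\|_0 \geq \|\widetilde{\xx}^*\|_0 = |\JJ|$. Combined with the inclusion $\mathrm{supp}(\yy) \subseteq \JJ$, which gives $\|\yy\|_0 \leq |\JJ|$, we conclude $\|\yy\|_0 = |\JJ|$ and therefore $\mathrm{supp}(\yy) = \JJ = \mathrm{supp}(\widetilde{\xx}^*)$.

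Finally, I would argue that $\yy$ solves the box-constrained $\ell^0$ problem. Any feasible point $\zz$ of that problem is in particular feasible for \eqref{IC5.001}, so its $\ell^0$-norm is at least $|\JJ|$; conversely, $\widetilde{\xx}^*$ is feasible for the box-constrained problem because $\|\widetilde{\xx}^*\|_\infty \leq s$, so the optimal value of the box-constrained $\ell^0$ problem is exactly $|\JJ|$, which $\yy$ attains.

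I do not foresee a real obstacle: the work was done in Theorem \ref{lower_constraint}. The only point that requires mild care is confirming that $\widetilde{\xx}^*$ is admissible in both the box-constrained $\ell^1$ problem (needed implicitly so that \eqref{IC5.01} is nonempty) and the box-constrained $\ell^0$ problem, both of which follow from the hypothesis $s \geq \|\widetilde{\xx}^*\|_\infty$.
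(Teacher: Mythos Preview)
Your proposal is correct and follows essentially the same approach as the paper: apply Theorem \ref{lower_constraint} with $\xx^*=\widetilde{\xx}^*$ to get $\mathrm{supp}(\yy)\subseteq\JJ$, then use the $\ell^0$-minimality of $\widetilde{\xx}^*$ to obtain $\|\yy\|_0=\|\widetilde{\xx}^*\|_0$ and hence equality of supports. Your final paragraph, which explicitly argues that the optimal value of the box-constrained $\ell^0$ problem equals $|\JJ|$ via the feasibility of $\widetilde{\xx}^*$, is a bit more detailed than the paper's terse conclusion but amounts to the same argument.
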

\begin{proof}
Since $\AAA \widetilde{\xx}^* = \bb$, any solution $\yy$ of \eqref{IC5.01} must also be solution of \eqref{IC3} with $\xx^*=\widetilde{\xx}^*$. Therefore, according to Theorem \ref{lower_constraint}, $\yy$ satisfies \begin{equation} \label{IC5.02} 
\mathrm{supp} (\yy) \subseteq \mathrm{supp} (\widetilde{\xx}^*).
\end{equation} 
Consequently, 
\[
\| \yy \|_0 \leq \| \widetilde{\xx}^* \|_0, 
\]
but from the definition \eqref{IC5.001} of $\widetilde{\xx}^*$  
\[
\| \yy \|_0 \geq \| \widetilde{\xx}^* \|_0, 
\]
and we conclude that $\| \yy \|_0 = \| \widetilde{\xx}^* \|_0$, which together with \eqref{IC5.02} complete the argument.
\end{proof}

We now show that precise information about the magnitude/strength may make it possible to (perfectly) recover sources with constant strength 
\begin{equation} \label{IC5.1}
    \xx^* = k \sum_{\JJ} \ee_j, \quad k >0,  
\end{equation}
i.e., $x_j^* = k$ for all $j \in \JJ=\mathrm{supp} (\xx^*)$. 
\begin{corollary} 
\label{lower_upper_constraint}
If there exists a vector $\cc$ such that \eqref{IC1} and \eqref{IC2} hold, then $\xx^*$
is the unique solution of 
\begin{equation}
\label{IC6}
      \min_{0 \leq \xx \leq k} \| \WW \xx \|_1 \quad \textnormal{subject to} \quad \AAA \xx = \AAA \overbrace{k \sum_{\JJ} \ee_j}^{= \xx^*}. 
\end{equation}
\end{corollary}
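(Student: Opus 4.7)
The plan is to invoke Theorem \ref{lower_constraint} as a "black box" to restrict the support of any candidate solution $\yy$, and then exploit the upper box constraint $\xx \leq k$, together with \eqref{IC1}, to force the components of $\yy$ on the support to equal $k$ exactly.

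First, I would check that the hypotheses of Theorem \ref{lower_constraint} are satisfied in this setting. With $\xx^* = k \sum_{\JJ} \ee_j$, clearly $\xx^* \geq 0$ and $\| \xx^* \|_\infty = k$, so the choice $s = k$ does lie in $[\| \xx^* \|_\infty, \infty]$. Theorem \ref{lower_constraint} then guarantees both that $\xx^*$ itself solves \eqref{IC6} (so a minimizer exists) and that every solution $\yy$ of \eqref{IC6} satisfies $\mathrm{supp}(\yy) \subseteq \JJ$. Thus any such $\yy$ has the form $\yy = \sum_{\JJ} y_j \ee_j$ with $0 \leq y_j \leq k$ for every $j \in \JJ$.

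Next, I would extract one scalar equation from the constraint $\AAA \yy = \AAA \xx^*$ by applying $\AAd$ (equivalently $\PP$) and then taking the inner product with the vector $\cc$ from \eqref{IC1}. This gives
\begin{equation*}
\sum_{\JJ}(y_j - k)\, \PP \ee_j \cdot \cc = 0.
\end{equation*}
Using \eqref{IC1}, each factor $\PP \ee_j \cdot \cc$ equals $\| \PP \ee_j \|_2 > 0$, so
\begin{equation*}
\sum_{\JJ}(y_j - k)\, \| \PP \ee_j \|_2 = 0.
\end{equation*}

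Finally, the decisive step is to combine this single linear identity with the upper box constraint. Since $y_j \leq k$, every summand $(y_j - k)\| \PP \ee_j \|_2$ is non-positive, yet the sum vanishes and all weights $\| \PP \ee_j \|_2$ are strictly positive; hence $y_j = k$ for all $j \in \JJ$. This forces $\yy = k \sum_{\JJ} \ee_j = \xx^*$, establishing uniqueness.

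The only real subtlety here is recognising that the upper constraint $s = k$ is calibrated so that the "obvious" inequality $(y_j - k)\| \PP \ee_j\|_2 \leq 0$ can be used in one direction only; the support restriction from Theorem \ref{lower_constraint} is what prevents any compensating positive contribution from indices in $\JJ^c$ and thus lets the single scalar identity pin down all $|\JJ|$ components of $\yy$ at once. Everything else is bookkeeping on top of Theorem \ref{lower_constraint}.
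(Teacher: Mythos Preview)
Your proof is correct and follows essentially the same strategy as the paper: invoke Theorem \ref{lower_constraint} to get $\mathrm{supp}(\yy)\subseteq\JJ$ and that $\xx^*$ is a minimizer, then derive the scalar identity $\sum_{\JJ}(k-y_j)\|\PP\ee_j\|_2=0$ and combine it with $0\leq y_j\leq k$ to force $y_j=k$ for all $j\in\JJ$.

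The only (minor) difference is in how that scalar identity is obtained. The paper argues that since both $\yy$ and $\xx^*$ are minimizers, $\|\WW\yy\|_1=\|\WW\xx^*\|_1$, which, once $\mathrm{supp}(\yy)\subseteq\JJ$ is known, reads $\sum_\JJ y_j\|\PP\ee_j\|_2=\sum_\JJ k\|\PP\ee_j\|_2$. You instead go back to the feasibility constraint $\AAA\yy=\AAA\xx^*$, apply $\PP$ and dot with $\cc$, and use \eqref{IC1} to reach the same equation. Your route is arguably a touch more direct (it does not use the equality of optimal values), but both arguments are equally valid and the remainder is identical.
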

\begin{proof}
Problem \eqref{IC6} is the same as problem \eqref{IC3} with a true source $\xx^*$ in constant strength form \eqref{IC5.1} and with $s=k$. Theorem \ref{lower_constraint} asserts that $\xx^*$ solves \eqref{IC6} and that any other solution $\yy$ of this problem must be such that $\mathrm{supp} (\yy) \subseteq \mathrm{supp} (\xx^*)$. Since both $\xx^*$ and $\yy$ solve \eqref{IC6}, it follows that $\| \WW \yy \|_1 = \| \WW \xx^* \|_1$, and we find that, see \eqref{eq:4},  
\begin{equation*}
    \sum_{\JJ} y_j \| \PP \ee_j \|_2 =  \sum_{\JJ} x_j^* \| \PP \ee_j \|_2 =  \sum_{\JJ}k \| \PP \ee_j \|_2 
\end{equation*}
or 
\begin{equation*}
    \sum_{\JJ} (k-y_j) \| \PP \ee_j \|_2 =  0.
\end{equation*}
This equality and the constraint $0 \leq y_j \leq k$, $\forall j \in \JJ$, lead to the conclusion that $y_j=k$ for all $j \in \JJ$ and hence that $\yy=\xx^*$. 
\end{proof}


In practice one typically does {\em not} have exact knowledge about the constant strength $k$ of the source(s). We now present a result which enables the recovery, under ideal conditions, of $k$. In what follows, we assume that $k$ is a fixed unknown positive number, and that we want to recover a constant strength source \eqref{IC5.1}. 

First we note that the state equation must have a solution in order for the basis pursuit problem \eqref{eq5} to be meaningful, and we thus introduce the set 
\begin{equation} \label{defS}
    S = \left\{ s \; | \; \exists \xx \mbox{ such that } \| \xx \|_{\infty} \leq s \mbox{ and } \quad \AAA \xx = \AAA \, k \sum_{\JJ} \ee_j \right\}. 
\end{equation}
\begin{corollary} 
\label{strength}
Assume that there exists a vector $\cc$ such that \eqref{IC1} and \eqref{IC2} hold for $\xx^*=k \sum_{\JJ} \ee_j$, $\JJ = \textnormal{supp}(\xx^*)$. Then the function $g:S \rightarrow \mathbb{R}$, 
\begin{equation*} 
    g(s):=\min_{0 \leq \xx \leq s} \| \WW \xx \|_1 \quad \textnormal{subject to} \quad \AAA \xx = \AAA \overbrace{k \sum_{\JJ} \ee_j}^{=\xx^*}, 
\end{equation*}
is non-increasing wrt. $s$ and satisfies 
\begin{align}
    \label{IC6.3}
    g(s) &> \| \WW \xx^* \|_1 \quad \mbox{for } s \in [0, k) \cap S, \\
    \label{IC6.2}
    g(s) &= \| \WW \xx^* \|_1 \quad \mbox{for } s \geq k. 
\end{align}
\end{corollary}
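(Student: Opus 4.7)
The plan is to treat the three assertions separately and to lean heavily on the inner-product identity that drove the proof of Theorem \ref{lower_constraint}. The monotonicity is almost free from the definition; the equality for $s\ge k$ follows directly from the earlier results; the only substantive step is proving strict inequality for $s<k$.

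First I would observe that monotonicity holds because the feasible set $\{\xx\mid 0\le\xx\le s,\ \AAA\xx=\AAA\xx^*\}$ grows with $s$, so $s_1\le s_2$ (both in $S$) forces $g(s_2)\le g(s_1)$. Next, for $s\ge k$, I would note that $\|\xx^*\|_\infty=k$, so $s\in[\|\xx^*\|_\infty,\infty]$; Theorem \ref{lower_constraint} then asserts that $\xx^*$ is itself a minimizer of the $s$-problem, giving $g(s)=\|\WW\xx^*\|_1$ and establishing \eqref{IC6.2}.

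The harder part is \eqref{IC6.3}. For $s\in[0,k)\cap S$, I would take any feasible $\yy$ (with $0\le\yy\le s$ and $\AAA\yy=\AAA\xx^*$) and replay the inner-product computation that led to \eqref{IC4} in the proof of Theorem \ref{lower_constraint}. Using \eqref{IC1}, solving that identity for $\sum_\JJ y_j\|\PP\ee_j\|_2$, and substituting into the definition of $\|\WW\yy\|_1$ yields
\[
\|\WW\yy\|_1=\|\WW\xx^*\|_1+\sum_{\JJ^c}y_i\|\PP\ee_i\|_2\Bigl(1-\tfrac{\PP\ee_i}{\|\PP\ee_i\|_2}\cdot\cc\Bigr).
\]
By \eqref{IC2} and $y_i\ge 0$, the extra term is nonnegative, so $\|\WW\yy\|_1\ge\|\WW\xx^*\|_1$. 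Strictness fails only if $y_i=0$ for all $i\in\JJ^c$; but then the identity collapses to $\sum_{\JJ}y_j\|\PP\ee_j\|_2=k\sum_\JJ\|\PP\ee_j\|_2$, while the constraints force $\sum_{\JJ}y_j\|\PP\ee_j\|_2\le s\sum_\JJ\|\PP\ee_j\|_2<k\sum_\JJ\|\PP\ee_j\|_2$, a contradiction. Hence every feasible $\yy$ satisfies $\|\WW\yy\|_1>\|\WW\xx^*\|_1$, and since $S$ guarantees the feasible set is nonempty, $g(s)>\|\WW\xx^*\|_1$.

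The main obstacle is really this last step: making sure the equality case in the inequality derived from \eqref{IC1}–\eqref{IC2} is ruled out cleanly when $s<k$. Once one recognizes that equality forces both $\mathrm{supp}(\yy)\subseteq\JJ$ and a weighted-sum identity that is incompatible with the box constraint $\yy\le s<k$, everything falls into place and the three claims combine to yield the corollary.
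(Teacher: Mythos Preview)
Your argument is correct. The monotonicity and the case $s\ge k$ are handled exactly as in the paper. For the strict inequality \eqref{IC6.3} you take a different route: you go back to the inner-product identity from the proof of Theorem~\ref{lower_constraint} and derive the explicit formula
\[
\|\WW\yy\|_1=\|\WW\xx^*\|_1+\sum_{\JJ^c}y_i\|\PP\ee_i\|_2\Bigl(1-\tfrac{\PP\ee_i}{\|\PP\ee_i\|_2}\cdot\cc\Bigr),
\]
then rule out the equality case via the box constraint. The paper instead invokes Corollary~\ref{lower_upper_constraint}: since $\xx^*$ is the \emph{unique} minimizer at $s=k$, any $\yy$ feasible for $s<k$ is also feasible for $s=k$ but differs from $\xx^*$ (because $\|\yy\|_\infty\le s<k$), hence $\|\WW\yy\|_1>\|\WW\xx^*\|_1$. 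The paper's argument is shorter and more modular, piggybacking on the uniqueness result already established; yours is more self-contained and yields an explicit quantitative identity, at the cost of essentially reproving part of Corollary~\ref{lower_upper_constraint} inside the argument.
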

\begin{proof} 
The function $g(s)$ is non-increasing because the size of the constraining set $\{ \xx \, | \, 0 \leq \xx \leq s \}$ increases as $s$ increases. 

Since $\| \xx^* \|_\infty = k$, it follows from Theorem \ref{lower_constraint} that $\xx^*$ solves  
\begin{equation}
    \label{IC6.1}
    \min_{0 \leq \xx \leq s} \| \WW \xx \|_1 \quad \textnormal{subject to} \quad \AAA \xx = \AAA \xx^*
\end{equation}
when $s \geq k$. Consequently, \eqref{IC6.2} must hold. 

According to Corollary \ref{lower_upper_constraint}, $\xx^*$ is the unique solution of \eqref{IC6.1} when $s=k$. Hence, it follows that \eqref{IC6.3} must be true.  
\end{proof}

If the true source(s) has constant strength, we thus obtain the following scheme for recovering both its size and magnitude $k$: 
\subsubsection*{Algorithm} 
\begin{itemize}
    \item Compute 
    \begin{equation} \label{IC6.4}
      g(s) = \min_{0 \leq \xx \leq s} \| \WW \xx \|_1 \quad \textnormal{subject to} \quad \AAA \xx = \bb 
\end{equation}
many times, varying size of the upper bound $s$. 
\item The strength $k$ is then the number such that $g(s) = c$ (a constant) for $s \geq k$ and $g(s) > c$ for $s<k$.
\item Corollary \ref{lower_upper_constraint} assures the perfect recovery of the true constant strength source by solving \eqref{IC6.4} with $s=k$.
\end{itemize}
We will explore a regularized version of this algorithm in the numerical experiments section below. 

\subsubsection*{Remarks about assumptions \eqref{IC1} and \eqref{IC2}} 
Our results rely on whether there exists a vector $\cc$ such that \eqref{IC1} and \eqref{IC2} hold. This is a subtle question, which we will now discuss in some detail.  

For example, if the cardinality of $\JJ$ equals two, i.e., $\JJ = \{ j_1, j_2 \}$, then \eqref{IC1} holds with 
\[
\cc = \left( 1+ \frac{\PP \ee_{j_1}}{\| \PP \ee_{j_1} \|_2} \cdot \frac{\PP \ee_{j_2}}{\| \PP \ee_{j_2} \|_2} \right)^{-1} \left( \frac{\PP \ee_{j_1}}{\| \PP \ee_{j_1} \|_2} + \frac{\PP \ee_{j_2}}{\| \PP \ee_{j_2} \|_2} \right). 
\]
On the other hand, \eqref{IC2} may fail to hold for this particular choice of $\cc$. 

More generally, using the notation $\cc = \sum_{l=1}^n c_l \ee_l$, \eqref{IC1} leads to a linear system with $n$ unknowns $c_1, \, c_2, \ldots, \, c_n$, 
\begin{equation} \label{IC9}
    \sum_{l=1}^n  \ee_l\cdot \frac{\PP \ee_j}{\| \PP \ee_j \|_2} \, c_l - 1 = 0 \quad \forall j \in \JJ, 
\end{equation}
which must have a solution obeying the inequality constraints, see \eqref{IC2},  
\begin{equation} \label{IC10}
\begin{split}
    & \sum_{l=1}^n  \ee_l\cdot \frac{\PP \ee_i}{\| \PP \ee_i \|_2} \, c_l - 1 < 0 \quad \forall i \in \JJ^c.
\end{split}
\end{equation}
In other words, the solution set of \eqref{IC9} must overlap with the convex domain specified in \eqref{IC10}. 
This issue can be explored with standard software tools: We suggest to solve the following linear programming problem (LPP), where $\delta > 0$ is small fixed number, 
\begin{equation} \label{IC10.1}
    \max \sum_{i \in \JJ^c} \epsilon_i
\end{equation}
subject to $\{ c_l \}$ and $\{ \epsilon_i \}$ satisfying 
\begin{equation} \label{IC10.2}
\begin{split}
    & \sum_{l=1}^n  \ee_l\cdot \frac{\PP \ee_j}{\| \PP \ee_j \|_2} \, c_l - 1 = 0 \quad \forall j \in \JJ, \\
    & \sum_{l=1}^n  \ee_l\cdot \frac{\PP \ee_i}{\| \PP \ee_i \|_2} \, c_l - 1 + \delta + \epsilon_i = 0 \quad \forall i \in \JJ^c, \\
    & \epsilon_i \geq 0 \quad \forall i \in \JJ^c, 
\end{split}
\end{equation}
where $\{\epsilon_i \, | \, i \in \JJ^c\}$ are slack variables. If one during the solution process of \eqref{IC10.1}-\eqref{IC10.2} detects $\{ c_l \}$ and $\{ \epsilon_i \}$ which satisfy the constraints \eqref{IC10.2}, then one can stop the LPP program and conclude that the solution set of \eqref{IC9} overlaps with the convex domain described in \eqref{IC10}. That is, there exists a vector $\cc$ such that \eqref{IC1} and \eqref{IC2} hold. We also note that this approach only yields a sufficient criteria for asserting that \eqref{IC1} and \eqref{IC2} are satisfied.  

Let us also comment that, if the cardinality of the index set $\JJ$ is significantly smaller than the number $n$ of columns in the matrix $\AAA \in \mathbb{R}^{m \times n}$, then the solution set of \eqref{IC9} will typically be relatively large and it becomes more "likely" that there exists $\cc$ such that \eqref{IC10} holds. Similarly, if the size of $\JJ$ increases, then one would expect that the solution set of \eqref{IC9} decreases and it becomes less "likely" that \eqref{IC10} is satisfied, i.e., perfect source recovery with \eqref{IC3} or \eqref{IC6} will "probably" not be possible. (Here we use "likely" and "probably" in an informal manner.) 

In the event that there does {\em not} exist $\cc$ 
such that \eqref{IC9} and \eqref{IC10} hold, one might wonder whether it is possible to bound the size of the support of a solution $\yy$ of \eqref{IC3}. That is, to what extent will the size of the support of the true source $\xx^* = \sum_\JJ x_j^* \ee_j$ potentially be increased by solving \eqref{IC3}? 

To investigate this issue, assume that $\JJ'$ is an extension of $\JJ$ such that properties analogous to \eqref{IC1} and \eqref{IC2} hold: There exists a vector $\cc$ such that
\begin{align}
\label{IC12}
\frac{\PP \ee_j}{\| \PP \ee_j \|_2} \cdot \cc &= 1 \quad \forall j \in \JJ', \\
\label{IC13}
\frac{\PP \ee_i}{\| \PP \ee_i \|_2} \cdot \cc &< 1 \quad \forall i \in (\JJ')^c. 
\end{align}

Using the same techniques as in the proof of Theorem \ref{lower_constraint}, we prove the following result.  
\begin{proposition}
Any solution $\yy$ of \eqref{IC3} satisfies   
\[
\mathrm{supp} (\yy) \subseteq \JJ'
\]
for any index set $\JJ' \supseteq \JJ = \textnormal{supp}(\xx^*)$ satisfying \eqref{IC12}-\eqref{IC13}. 
Consequently, a solution of \eqref{IC3} must obey 
\[
\mathrm{supp} (\yy) \subseteq \overline{\JJ}, 
\]
where 
\[
\overline{\JJ} = \bigcap \{\JJ' \; | \; \JJ' \supseteq \JJ \mbox{ and } \exists \, \cc \mbox{ such that } \JJ' \mbox{ satisfies \eqref{IC12}-\eqref{IC13}} \},  
\]
and we assume that $\overline{\JJ} \neq \emptyset$. This holds for any $s \in [\| \xx^* \|_\infty, \infty]$. 
\end{proposition}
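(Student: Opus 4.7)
The plan is to mimic the argument from Theorem~\ref{lower_constraint} almost verbatim, with $\JJ'$ playing the role that $\JJ$ played there. Fix an index set $\JJ' \supseteq \JJ$ for which a vector $\cc$ satisfying \eqref{IC12}--\eqref{IC13} exists, and let $\yy = \sum_i y_i \ee_i$ be any solution of \eqref{IC3}. Because $\mathrm{supp}(\xx^*) = \JJ \subseteq \JJ'$, I can rewrite $\xx^* = \sum_{\JJ'} x_j^* \ee_j$ (with $x_j^* = 0$ for $j \in \JJ' \setminus \JJ$); moreover $\xx^* \geq 0$ and $\|\xx^*\|_\infty \leq s$, so $\xx^*$ remains feasible for \eqref{IC3}.

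The key step is to take the inner product of the identity $\PP \yy = \PP \xx^*$ (which follows from $\AAA \yy = \AAA \xx^*$ since $\PP = \AAd \AAA$) with $\cc$, and then split the left hand side into contributions from $\JJ'$ and $(\JJ')^c$:
\begin{equation*}
\sum_{i \in \JJ'} y_i \| \PP \ee_i \|_2
+ \sum_{i \in (\JJ')^c} y_i \| \PP \ee_i \|_2 \frac{\PP \ee_i}{\| \PP \ee_i \|_2} \cdot \cc
= \sum_{j \in \JJ'} x_j^* \| \PP \ee_j \|_2,
\end{equation*}
where \eqref{IC12} has been used on both sides. Suppose, for contradiction, that there is $i' \in (\JJ')^c$ with $y_{i'} > 0$. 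Then, invoking the strict inequality \eqref{IC13} together with $y_i \geq 0$ for the other $(\JJ')^c$-indices, rearranging gives
\begin{equation*}
\sum_{i} y_i \| \PP \ee_i \|_2 > \sum_{j \in \JJ'} x_j^* \| \PP \ee_j \|_2,
\end{equation*}
i.e., $\| \WW \yy \|_1 > \| \WW \xx^* \|_1$, which contradicts the optimality of $\yy$ since $\xx^*$ is feasible. Therefore $\mathrm{supp}(\yy) \subseteq \JJ'$.

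Finally, since this inclusion holds for every admissible $\JJ'$, intersecting over all such $\JJ'$ yields $\mathrm{supp}(\yy) \subseteq \overline{\JJ}$. I do not expect a genuine obstacle here; the one thing to double-check is that strict inequality in \eqref{IC13} really propagates to a strict inequality in the aggregated sum, which it does as soon as a single $y_{i'}$ in $(\JJ')^c$ is positive, because all other summands on that side are nonnegative. The rest is a direct transcription of the Theorem~\ref{lower_constraint} argument.
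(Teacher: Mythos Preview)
Your proposal is correct and follows essentially the same approach as the paper's own proof: both arguments transplant the reasoning of Theorem~\ref{lower_constraint} to the enlarged index set $\JJ'$, take the inner product of $\PP\yy=\PP\xx^*$ with the vector $\cc$ from \eqref{IC12}--\eqref{IC13}, and derive the contradiction $\|\WW\yy\|_1>\|\WW\xx^*\|_1$ from a hypothetical positive component in $(\JJ')^c$. The paper merely sketches these steps by pointing back to \eqref{IC4}--\eqref{IC5}, whereas you have written them out, but there is no substantive difference.
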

\begin{proof}
Let $\yy$ denote a solution of \eqref{IC3}. Very similar to the reasoning leading to \eqref{IC4}, we find that 
\begin{equation*} 
\sum_{(\JJ')^c \cup \JJ'} y_i \| \PP  \ee_i \|_2 \frac{\PP \ee_i}{\| \PP  \ee_i \|_2} \cdot \cc  
= \sum_{\JJ} x_j^* \| \PP  \ee_j \|_2. 
\end{equation*}
Assuming that there exists $i' \in (\JJ')^c$ such that $y_i > 0$, we can argue as we did in 
\eqref{IC4.1}-\eqref{IC5} to conclude that 
\[
\| \WW \yy \|_1 > \| \WW \xx^* \|_1,
\]
where $\xx^* = \sum_\JJ x_j^* \ee_j$. Since $\xx^*$ satisfies the constraints in \eqref{IC3}, there can not exist such $i' \in (\JJ')^c$. It follows that $\mathrm{supp} (\yy) \subseteq \JJ'$. 

\end{proof}

As far as we understand, this result is primarily of theoretical interest because in practice it seems difficult to compute the smallest extension $\overline{\JJ}$. We also note that, even though there exists, by assumption, a vector $\cc$ for each $\JJ'$ such that \eqref{IC12} and \eqref{IC13} hold, it does not follow from our argument that $\overline{\JJ}$ satisfies these conditions. 


\subsection{Optimality conditions} \label{sec:optimality_conditions}
In the previous subsection we showed that the support of any solution of \eqref{IC3} is a subset of the support of the true source $\xx^*$ given the assumptions \eqref{IC1}-\eqref{IC2}. We will now show that these assumptions, which we motivated from a geometrical point view, in fact correspond to the existence of a KKT/Lagrange multiplier for the optimality system associated with \eqref{IC3}. 

First, since $\AAA\xx = \AAA\xx^* \Leftrightarrow \PP\xx = \PP\xx^*$, recall that $\PP = \AAA^\dagger \AAA$, we can formulate the standard KKT-conditions for \eqref{IC3}, see, e.g., \cite[Theorem 3.34]{rus06}, with $s=\infty$, as
\begin{eqnarray*}
  \WW\partial\|\xx\|_1 - \PP\boldsymbol{\lambda} - \boldsymbol{\mu} &\ni& \mathbf{0}, \\
  \PP\xx - \PP\xx^* &=& \mathbf{0}, \\
  \xx \cdot \boldsymbol{\mu} &=& 0, \\ 
  \xx, \boldsymbol{\mu} &\geq& \mathbf{0},
\end{eqnarray*}
where $\partial$ denotes the subgradient, and we have used the chain rule and the fact that the diagonal entries of $\WW$ are positive: 
\begin{equation*}
    \partial_\xx \| \WW \xx\|_1 = \WW\partial\|\WW \xx\|_1 = \WW\partial\|\xx\|_1.
\end{equation*}
Recall that $\xx^* = \sum_\JJ x_j^* \ee_j$, $\xx^* \geq \mathbf{0}$. 
Thus, for $\xx^*$ to be a solution of \eqref{IC3} it follows that there must exist some $\boldsymbol{\lambda}^*$ and $\boldsymbol{\mu}^*$ such that 
\begin{equation} \label{our_KKT}
    [\WW\partial\|\xx^*\|_1]_i \ni 
    \begin{cases}
        [\PP\boldsymbol{\lambda}^*]_i, \quad &i \in \mathcal{J}, \\
        [\PP\boldsymbol{\lambda}^*]_i + \mu_i^*, \quad &i \in \mathcal{J}^c. 
    \end{cases}
\end{equation}
(Where $[\mathbf{v}]_i$ represents the $i$'th component of the Euclidean vector $\mathbf{v}$). 
Here we have used the implication 
\begin{equation*}
\left.
\begin{array}{l}
    \xx^* \cdot \boldsymbol{\mu}^* = 0   \\
    \xx^*, \boldsymbol{\mu}^* \geq \mathbf{0} \\
    \JJ = \mathrm{supp}(\xx^*)
\end{array}
\right\}
      \Longrightarrow \; \; \mu_i^* = 0 \; \; \forall i \in \JJ.
\end{equation*}

If we evaluate the subgradient
\begin{equation*}
    \left[ \partial\|\xx^*\|_1 \right]_i =  
        \begin{cases}
            \{1\}, \quad &i \in \JJ, \\
            [-1,1], \quad &i \in \JJ^c,
        \end{cases}
\end{equation*}
in \eqref{our_KKT}, and combine it with 
\begin{equation*}
    [\PP\boldsymbol{\lambda}^*]_i = \PP\boldsymbol{\lambda^*} \cdot \ee_i = \boldsymbol{\lambda}^* \cdot \PP\ee_i,
\end{equation*}
we derive that
\begin{eqnarray*}
    \|\PP\ee_j\|_2 &=& \PP\ee_j \cdot \boldsymbol{\lambda}^*, \quad j \in \JJ, \\
    \|\PP\ee_j\|_2 &\geq& \PP\ee_j \cdot \boldsymbol{\lambda}^*, \quad j \in \JJ^c,
\end{eqnarray*}
because $\WW = \textnormal{diag} \left( \|\PP\ee_1\|_2, \, \|\PP\ee_2\|_2, \, \ldots, \, \|\PP\ee_n\|_2 \right)$, see \eqref{eq:4}, and due to the fact that $\boldsymbol{\mu}^* \geq \mathbf{0}$. 

If we have strict complementary slackness, i.e., $\mu^*_i > 0$ whenever $x^*_i = 0$, we get the stronger result
\begin{eqnarray*}
    \|\PP\ee_j\|_2 &=& \PP\ee_j \cdot \boldsymbol{\lambda}^*, \quad j \in \JJ, \\
    \|\PP\ee_j\|_2 &>& \PP\ee_j \cdot \boldsymbol{\lambda}^*, \quad j \in \JJ^c,
\end{eqnarray*}
which are assumptions \eqref{IC1}-\eqref{IC2} with $\mathbf{c} = \boldsymbol{\lambda}^*$.

\subsection{Regularized problems} \label{subsec:regularized_problems}
Since $\PP=\AAA^\dagger \AAA$, see \eqref{eq:3}, the null spaces of $\PP$ and $\AAA$ coincide,  
and we may write the basis pursuit problem \eqref{eq5} in the form 
\begin{equation} \label{ICR0.9} 
      \min_{0 \leq \xx \leq s} \| \WW \xx \|_1 \quad \textnormal{subject to} \quad \PP \xx = \AAA^\dagger \bb. 
\end{equation}
We also recall that the entries of the regularization matrix $\WW$ \eqref{eq:4} are defined in terms of the projection $\PP$. Based on these observations, and due to mathematical convenience, we will in this subsection analyse a regularized version of \eqref{ICR0.9} instead of \eqref{eq:3.1}: 
\begin{equation} \label{ICR0.91}
    \min_{0 \leq \xx \leq s} \left\{ \frac{1}{2}\|\PP \xx - \AAA^\dagger \bb\|_2^2 + \alpha \sum_i w_i |x_i| \right\}. 
\end{equation}
More precisely, if $\bb = \bb^\dagger = \AAA \xx^*$, to what extent can we recover $\xx^*$ by solving \eqref{ICR0.91}? Note that, in this case $\AAA^\dagger \bb^\dagger = \AAA^\dagger \AAA \xx^* = \PP \xx^*$. 

We remark that we have {\em not} managed to prove results similar to those presented in this subsection for the problem \eqref{eq:3.1}. In fact, numerical experiments indicate that we do not necessarily obtain as good reconstructions with \eqref{eq:3.1} as with \eqref{ICR0.91} when $\alpha >0$ is not very close to zero.

We now prove a regularized counterpart to Theorem \ref{lower_constraint}. 
\begin{theorem} \label{lower_constraint_ineq}
Let $\xx^* = \sum_\JJ x_j^* \ee_j \geq 0$, $\JJ=\mathrm{supp}(\xx^*)$, and assume that there exist a vector $\dd$ and a constant $\gamma < 1$ such that 
\begin{align}
\label{ICR1}
\frac{\PP \ee_j}{\| \PP \ee_j \|_2} \cdot \dd &= 1 \quad \forall j \in \JJ, \\
\label{ICR2}
\frac{\PP \ee_i}{\| \PP \ee_i \|_2} \cdot \dd &< \gamma \quad \forall i \in \JJ^c. 
\end{align}
Then any  
\begin{equation}
\label{ICR3}
    \yy \in \argmin_{0 \leq \xx \leq s} \underbrace{\left\{\frac{1}{2}\left\|\PP\xx-\PP \overbrace{\sum_\JJ x_j^* \ee_j}^{= \xx^*}\right\|_2^2 + \alpha\|\WW\xx\|_1 \right\}}_{:=\mathcal{T}_\alpha(\xx)}  
\end{equation}
satisfies 
\begin{align} \label{ICR3.1}
    \left| \| \WW \yy \|_1 - \| \WW \xx^*\|_1 \right| &\leq c \, \frac{\sqrt{\alpha}}{1-\gamma}  
\end{align}
and 
\begin{align} \label{ICR3.2}
    \| \WW \yy \|_{1,\JJ^c} := \sum_{\JJ^c} y_i \| \PP \ee_i \|_2 &\leq c \, \frac{\sqrt{\alpha}}{1-\gamma} , 
\end{align}
provided that $s \in [\| \xx^* \|_\infty, \infty ]$. Here, $c$ is a constant which is independent of $\alpha$, $\gamma$ and $s$, but depends on $\xx^*$ and $\dd$. 
\end{theorem}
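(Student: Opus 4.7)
The plan is to mirror the structure of the proof of Theorem~\ref{lower_constraint}, but with Cauchy--Schwarz absorbing the nonzero data-fidelity residual. First I would exploit the fact that $\xx^*$ is feasible for \eqref{ICR3}: $\xx^* \geq 0$ is given, and $s \geq \| \xx^* \|_\infty$ ensures $\xx^* \leq s$ componentwise. Then the optimality $\mathcal{T}_\alpha(\yy) \leq \mathcal{T}_\alpha(\xx^*) = \alpha \| \WW \xx^* \|_1$ yields, by separately dropping the $\ell^1$-term and the data-fidelity term, the two inequalities
\[
\| \PP \yy - \PP \xx^* \|_2 \leq \sqrt{2\alpha \| \WW \xx^* \|_1} \quad \text{and} \quad \| \WW \yy \|_1 \leq \| \WW \xx^* \|_1.
\]

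Next I would introduce the bookkeeping quantities
\[
T := \sum_{\JJ^c} y_i \| \PP \ee_i \|_2, \qquad S := \sum_{\JJ} (x_j^* - y_j) \| \PP \ee_j \|_2,
\]
and note that, since $\yy, \xx^* \geq 0$, one has $\| \WW \yy \|_1 = \sum_\JJ y_j \| \PP \ee_j \|_2 + T$ and $\| \WW \xx^* \|_1 = \sum_\JJ x_j^* \| \PP \ee_j \|_2$, so the second inequality above reads $T \leq S$ (and in particular $S \geq 0$). Taking the inner product of $\PP \yy - \PP \xx^*$ with $\dd$, splitting the sum over $\JJ$ and $\JJ^c$, and invoking \eqref{ICR1} and \eqref{ICR2} gives the identity
\[
(\PP \yy - \PP \xx^*) \cdot \dd = -S + \sum_{\JJ^c} y_i \| \PP \ee_i \|_2 \, \beta_i,
\]
where $\beta_i := (\PP \ee_i / \| \PP \ee_i \|_2) \cdot \dd < \gamma$ for $i \in \JJ^c$. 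Bounding $\sum_{\JJ^c} y_i \| \PP \ee_i \|_2 \beta_i \leq \gamma T$ (using $y_i \geq 0$) and the left-hand side in absolute value by Cauchy--Schwarz combined with the $L^2$-bound from Step 1, I obtain
\[
S \leq \gamma T + R, \qquad R := \| \dd \|_2 \sqrt{2\alpha \| \WW \xx^* \|_1}.
\]

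Combining $T \leq S \leq \gamma T + R$ forces $(1-\gamma) T \leq R$, which is exactly \eqref{ICR3.2} with $c := \| \dd \|_2 \sqrt{2 \| \WW \xx^* \|_1}$. For \eqref{ICR3.1}, the identity $\| \WW \yy \|_1 - \| \WW \xx^* \|_1 = T - S$ together with $0 \leq S - T \leq (\gamma - 1) T + R \leq R \leq R/(1-\gamma)$ yields the same bound with the same constant. The main obstacle I anticipate is essentially notational: I have to be careful that $\beta_i$ can a priori be negative (only the upper bound $\beta_i < \gamma$ is assumed) so that the step $\sum_{\JJ^c} y_i \| \PP \ee_i \|_2 \beta_i \leq \gamma T$ is a genuine inequality rather than a two-sided estimate, and to track signs through the inner-product identity. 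Once the accounting quantities $T$ and $S$ are in place, no idea beyond the argument of Theorem~\ref{lower_constraint} plus Cauchy--Schwarz is required.
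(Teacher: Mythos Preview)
Your proof is correct and follows essentially the same route as the paper: feasibility of $\xx^*$ gives $\|\WW\yy\|_1 \leq \|\WW\xx^*\|_1$ and the residual bound $\|\PP\yy-\PP\xx^*\|_2 \leq \sqrt{2\alpha\|\WW\xx^*\|_1}$, then the inner product with $\dd$ combined with \eqref{ICR1}--\eqref{ICR2} and Cauchy--Schwarz yields both estimates with the identical constant $c=\sqrt{2}\,\|\dd\|_2\sqrt{\|\WW\xx^*\|_1}$. Your bookkeeping via $T$ and $S$ is a slightly more compact packaging of the same inequalities the paper manipulates directly (the paper's \eqref{ICR4} is your $T\leq S$ and its \eqref{ICR6} amounts to your $S\leq\gamma T+R$); the only cosmetic point is that the final step $R\leq R/(1-\gamma)$ tacitly uses $\gamma\geq 0$, but the paper's argument has the same implicit restriction and one may always take $\gamma\in[0,1)$ without loss.
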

\subsubsection*{Remark} 
If there exists a vector $\cc$ such that \eqref{IC1} and \eqref{IC2} hold, then there is a vector $\dd$ and constant $\gamma < 1$ such that \eqref{ICR1} and \eqref{ICR2} are satisfied because we consider finite dimensional problems: $\dd = \cc$ and choose $\gamma$ such that  
\begin{align*}
    &\max_{i \in \JJ^c} \left\{ \frac{\PP \ee_i}{\| \PP \ee_i \|_2} \cdot \cc \right\} < \gamma < 1.  
\end{align*}

\begin{proof}[Proof of Theorem \ref{lower_constraint_ineq}]
Assume that $\yy$ belongs to the $\argmin$ set defined in \eqref{ICR3}. Then, because $s \geq \| \xx^* \|_\infty$,
\[
\mathcal{T}_\alpha(\yy) \leq \mathcal{T}_\alpha(\xx^*) = \alpha \| \WW \xx^* \|_1, 
\]
which implies that 
\begin{align}
\label{ICR4}
\alpha \| \WW \yy \|_1 &\leq \alpha \| \WW \xx^* \|_1, \\
\label{ICR5}
    \| \ttau \|_2^2 &\leq 2 \alpha \| \WW \xx^* \|_1,  
\end{align}
where 
\[
\ttau = \PP \yy - \PP \xx^*. 
\]

Now, 
\[
\PP \yy \cdot \dd = \PP \xx^* \cdot \dd + \ttau \cdot \dd, 
\]
which leads to  
\[
\sum_{\JJ \cup \JJ^c} y_i \PP \ee_i \cdot \dd = \sum_\JJ x_j^* \PP \ee_j \cdot \dd + \ttau \cdot \dd. 
\]
Combining assumptions \eqref{ICR1} and \eqref{ICR2} with the constraint $y_i \geq 0$ yield that 
\begin{equation*}
    \gamma \sum_{\JJ^c} y_i \| \PP \ee_i \|_2 + \sum_\JJ y_i \| \PP \ee_i \|_2 \geq \sum_\JJ x_j^* \| \PP \ee_j \|_2 + \ttau \cdot \dd
\end{equation*}
or, also using the fact that $\| \WW \xx^* \|_1 = \sum_\JJ x_j^* \| \PP \ee_j \|_2$ and \eqref{ICR4}, 
\begin{align*}
    \sum_\JJ y_i \| \PP \ee_i \|_2 &\geq \| \WW \xx^* \|_1 - \gamma \sum_{\JJ^c} y_i \| \PP \ee_i \|_2 + \ttau \cdot \dd \\
    &\geq \| \WW \xx^* \|_1 - \gamma \left[ \| \WW \xx^* \|_1 - \sum_\JJ y_i \| \PP \ee_i \|_2 \right] + \ttau \cdot \dd \\
    &= (1-\gamma) \| \WW \xx^* \|_1 + \gamma \sum_\JJ y_i \| \PP \ee_i \|_2  + \ttau \cdot \dd,  
\end{align*}
which, since $\gamma < 1$, implies that 
\begin{equation} \label{ICR6}
    \sum_\JJ y_i \| \PP \ee_i \|_2 \geq \| \WW \xx^* \|_1 + \frac{\ttau \cdot \dd}{1 - \gamma}. 
\end{equation}

From \eqref{ICR4}, keeping in mind that $y_i \geq 0$ and invoking \eqref{ICR6}, it follows that  
\begin{align}
\nonumber
    \| \WW \xx^* \|_1 \geq \| \WW \yy \|_1 &= \sum_{\JJ \cup \JJ^c} y_i \| \PP \ee_i \|_2 \\
    \nonumber
    &\geq \sum_{\JJ} y_i \| \PP \ee_i \|_2 \\
    \label{ICR7}
    &\geq \| \WW \xx^* \|_1 + \frac{\ttau \cdot \dd}{1 - \gamma}, 
\end{align}
and therefore 
\begin{align*}
    \left| \| \WW \yy \|_1 - \| \WW \xx^*\|_1 \right| &\leq \left| \frac{\ttau \cdot \dd}{1 - \gamma} \right| \\
    & \leq \underbrace{\sqrt{2} \| \dd \|_2 \sqrt{\| \WW \xx^*\|_1}}_{:=c} \, \frac{\sqrt{\alpha}}{1 - \gamma} ,
\end{align*}
where we have used the Cauchy-Schwarz inequality and \eqref{ICR5}. This completes the proof of \eqref{ICR3.1}. 

Combing \eqref{ICR4} and \eqref{ICR6} it follows that 
\begin{align*}
    \| \WW \xx^* \|_1 \geq \| \WW \yy \|_1 &= \sum_{\JJ \cup \JJ^c} y_i \| \PP \ee_i \|_2 \\
    &\geq \| \WW \xx^* \|_1 + \frac{\ttau \cdot \dd}{1 - \gamma} + \sum_{\JJ^c} y_i \| \PP \ee_i \|_2, 
\end{align*}
which implies that 
\begin{equation*}
    - \frac{\ttau \cdot \dd}{1 - \gamma} \geq \sum_{\JJ^c} y_i \| \PP \ee_i \|_2 \geq 0,  
\end{equation*}
where we have also used the constraint $y_i \geq 0$. Hence, 
\begin{align*}
    \sum_{\JJ^c} y_i \| \PP \ee_i \|_2 &\leq \left| \frac{\ttau \cdot \dd}{1 - \gamma} \right| \\
    & \leq \sqrt{2} \| \dd \|_2 \sqrt{\| \WW \xx^*\|_1} \, \frac{\sqrt{\alpha}}{1 - \gamma} ,
\end{align*}
which completes the proof of \eqref{ICR3.2}. (Note also that \eqref{ICR7} implies that $\frac{\ttau \cdot \dd}{1 - \gamma} \leq 0$.)
\end{proof}

The next result contains the regularized version of Corollary \ref{strength}. That is, we consider the possibility for determining the strength $k$ of a source with constant strength: 
\begin{equation*}
    \xx^* = k \sum_\JJ \ee_j. 
\end{equation*}
\begin{corollary}
If assumptions \eqref{ICR1} and \eqref{ICR2} hold with $\JJ = \textnormal{supp} (\xx^*)$, then any 
\begin{equation} \label{ICR7.9}
    \yy \in \argmin_{0 \leq \xx \leq s} \underbrace{\left\{\frac{1}{2}\left\|\PP\xx-\PP \overbrace{k \sum_\JJ \ee_j}^{= \xx^*}\right\|_2^2 + \alpha\|\WW\xx\|_1 \right\}}_{=\mathcal{T}_\alpha(\xx)}  
\end{equation}
satisfies 
\begin{align}
    \label{ICR8}
    &\| \WW \yy \|_1 \geq \left( 1 + \frac{(1-\gamma)(k-s)}{k \gamma} \right) \| \WW \xx^*\|_1 - C \, \frac{1}{\gamma} \sqrt{s} \sqrt{\alpha} \quad \mbox{for } s \in [0, k) \cap S, \\
     %
     \label{ICR9}
    &\left| \| \WW \yy \|_1 - \| \WW \xx^*\|_1 \right| \leq c \, \frac{\sqrt{\alpha}}{1-\gamma}  \quad \mbox{for } s \geq k, 
\end{align}
where the set $S$ is defined in \eqref{defS} and $c$ and $C$ are constants which are independent of $\gamma$, $s$ and $\alpha$. 
\end{corollary}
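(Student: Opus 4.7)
The plan is to split into the two cases $s \geq k$ and $s \in [0,k)\cap S$. For $s \geq k$ the estimate \eqref{ICR9} is immediate: $\xx^* = k\sum_\JJ \ee_j$ satisfies $\|\xx^*\|_\infty = k \leq s$, so $\xx^*$ is feasible for \eqref{ICR7.9}, and Theorem \ref{lower_constraint_ineq} applies verbatim (with the $\dd$ and $\gamma$ supplied by \eqref{ICR1}-\eqref{ICR2}) to produce exactly \eqref{ICR9}. The substantive work is the case $s \in [0,k)\cap S$, where $\xx^*$ is no longer admissible and the comparison step in the proof of Theorem \ref{lower_constraint_ineq} has to be replaced.

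For the substantive case I would begin by repeating the opening of the proof of Theorem \ref{lower_constraint_ineq}. Setting $\ttau = \PP\yy - \PP\xx^*$ and dotting the identity $\PP\yy = \PP\xx^* + \ttau$ with $\dd$, the hypotheses \eqref{ICR1}, \eqref{ICR2} and the sign constraint $\yy \geq \mathbf{0}$ yield
\[
\sum_\JJ y_i \|\PP\ee_i\|_2 + \gamma \sum_{\JJ^c} y_i \|\PP\ee_i\|_2 \;\geq\; \|\WW\xx^*\|_1 + \ttau\cdot\dd.
\]
The original argument continued by using $\|\WW\yy\|_1 \leq \|\WW\xx^*\|_1$, which required $\xx^*$ to be feasible. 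Instead I would exploit the upper-bound constraint $y_j \leq s$ on $\JJ$, which combined with $\sum_\JJ \|\PP\ee_j\|_2 = \|\WW\xx^*\|_1/k$ gives $\sum_\JJ y_i \|\PP\ee_i\|_2 \leq (s/k)\|\WW\xx^*\|_1$. Minimizing $A+B$ over $A+\gamma B \geq \|\WW\xx^*\|_1 + \ttau\cdot\dd$, $0 \leq A \leq (s/k)\|\WW\xx^*\|_1$, $B \geq 0$ (with $A, B$ standing for the two sums above) is a trivial two-variable LP whose optimum is attained at $A=(s/k)\|\WW\xx^*\|_1$ and yields
\[
\|\WW\yy\|_1 \;\geq\; \frac{k-(1-\gamma)s}{k\gamma}\|\WW\xx^*\|_1 + \frac{\ttau\cdot\dd}{\gamma},
\]
which matches the leading term of \eqref{ICR8} through the identity $\frac{k-(1-\gamma)s}{k\gamma} = 1 + \frac{(1-\gamma)(k-s)}{k\gamma}$.

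The hard part will be estimating $\ttau\cdot\dd$, and this is where the hypothesis $s\in S$ enters. I would pick any $\zz$ with $0 \leq \zz \leq s$ and $\AAA\zz = \AAA\xx^*$ (such a $\zz$ exists because $s\in S$); then $\PP\zz = \PP\xx^*$, so $\mathcal{T}_\alpha(\zz) = \alpha\|\WW\zz\|_1$, and the crude pointwise estimate $\|\WW\zz\|_1 \leq s\sum_i w_i$ follows from $0 \leq z_i \leq s$. Optimality $\mathcal{T}_\alpha(\yy) \leq \mathcal{T}_\alpha(\zz)$ then delivers $\|\ttau\|_2^2 \leq 2\alpha s\sum_i w_i$, and Cauchy--Schwarz gives $\ttau\cdot\dd \geq -\|\dd\|_2\sqrt{2\sum_i w_i}\sqrt{s\alpha}$. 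Substituting into the previous display produces the $-C\sqrt{s\alpha}/\gamma$ correction in \eqref{ICR8} with $C = \|\dd\|_2\sqrt{2\sum_i w_i}$, independent of $s$, $\alpha$ and $\gamma$. The main obstacle is precisely this $\|\ttau\|_2$ bound: unlike in Theorem \ref{lower_constraint_ineq} where comparison with $\xx^*$ was automatic, here one must rely on $S$ to furnish a sign-respecting, box-admissible competitor whose weighted $\ell^1$ norm is controlled linearly in $s$.
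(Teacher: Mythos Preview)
Your proof is correct and follows essentially the same route as the paper: for $s\ge k$ both invoke Theorem~\ref{lower_constraint_ineq} directly, and for $s\in[0,k)\cap S$ both dot $\PP\yy=\PP\xx^*+\ttau$ with $\dd$ to obtain the inequality $\sum_\JJ y_i\|\PP\ee_i\|_2+\gamma\sum_{\JJ^c}y_i\|\PP\ee_i\|_2\ge\|\WW\xx^*\|_1+\ttau\cdot\dd$, use the upper bound $y_j\le s$ on $\JJ$, and compare $\mathcal{T}_\alpha(\yy)$ against $\mathcal{T}_\alpha(\zz)$ for a feasible $\zz$ furnished by $S$ to control $\|\ttau\|_2$ and arrive at the same constant $C=\sqrt{2\|\WW\mathbf{1}\|_1}\,\|\dd\|_2$. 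Your two-variable LP phrasing is just a compact repackaging of the paper's algebraic substitution (its \eqref{ICR12}--\eqref{ICR13} chain); the resulting lower bound and constants coincide.
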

\begin{proof}
Inequality \eqref{ICR9} is the same as inequality \eqref{ICR3.1}, and we will thus focus on \eqref{ICR8}. Let $$s \in [0, k) \cap S$$ and assume that $\yy$ belongs to the $\argmin$ set in \eqref{ICR7.9}. Then, according to the definition \eqref{defS} of $S$, there exists $\zz$, with $\| \zz \|_{\infty} \leq s$, such that $\AAA \zz= \AAA \xx^*$, which implies that $\PP \zz = \PP \xx^*$. It follows that 
\[
\mathcal{T}_\alpha(\yy) \leq \mathcal{T}_\alpha(\zz) = \alpha \| \WW \zz \|_1 \leq \alpha s \| \WW \mathbf{1} \|_1 , 
\]
which implies that 
\begin{align}
\label{ICR10}
\alpha \| \WW \yy \|_1 &\leq \alpha s \| \WW \mathbf{1} \|_1, \\
\label{ICR11}
    \| \ttau \|_2^2 &\leq 2 \alpha s \| \WW \mathbf{1} \|_1,  
\end{align}
where 
\[
\ttau = \PP \yy - \PP \xx^* 
\]
and $\mathbf{1}$ denotes the vector with all components equal to $1$. 

Next, 
\[
\PP \yy \cdot \dd = \PP \xx^* \cdot \dd + \ttau \cdot \dd
\]
and it follows that   
\[
\sum_{\JJ \cup \JJ^c} y_i \PP \ee_i \cdot \dd = k \sum_\JJ \PP \ee_j \cdot \dd + \ttau \cdot \dd, 
\]
which we combine with assumptions \eqref{ICR1} and \eqref{ICR2} and the constraint $y_i \geq 0$ to find that 
\begin{equation} \label{ICR12}
    \gamma \sum_{\JJ^c} y_i \| \PP \ee_i \|_2 + \sum_\JJ y_i \| \PP \ee_i \|_2 \geq k \sum_\JJ \| \PP \ee_j \|_2 + \ttau \cdot \dd. 
\end{equation}
It thus follows that 
\begin{equation*}
     \sum_{\JJ \cup \JJ^c} y_i \| \PP \ee_i \|_2 \geq (1-\gamma) \sum_{\JJ^c} y_i \| \PP \ee_i \|_2 + k \sum_\JJ \| \PP \ee_j \|_2 + \ttau \cdot \dd, 
\end{equation*}
or 
\begin{equation} \label{ICR13}
     \| \WW \yy \|_1 \geq (1-\gamma) \sum_{\JJ^c} y_i \| \PP \ee_i \|_2 + \| \WW \xx^* \|_1 + \ttau \cdot \dd
\end{equation}
because $\| \WW \yy \|_1 = \sum_{\JJ \cup \JJ^c} y_i \| \PP \ee_i \|_2$ and $\| \WW \xx^* \|_1 = k \sum_\JJ \| \PP \ee_j \|_2$. 

Employing the constraint $0 \leq \yy \leq s$ and \eqref{ICR12} we find that 
\begin{align*}
    \gamma \sum_{\JJ^c} y_i \| \PP \ee_i \|_2 &\geq (k-s) \sum_\JJ \| \PP \ee_j \|_2 + \ttau \cdot \dd \\
    &= \frac{k-s}{k} \| \WW \xx^* \|_1 + \ttau \cdot \dd, \\
\end{align*}
which combined with \eqref{ICR13} yields 
\begin{align*}
    \| \WW \yy \|_1 &\geq (1-\gamma) \frac{(k-s)}{k \gamma} \| \WW \xx^* \|_1  + \| \WW \xx^* \|_1 + \frac{1}{\gamma}\ttau \cdot \dd \\
    &\geq \left( 1+ \frac{(1-\gamma)(k-s)}{k \gamma} \right) \| \WW \xx^* \|_1 - \frac{1}{\gamma} \sqrt{\alpha} \sqrt{s} \underbrace{\sqrt{2} \sqrt{\| \WW \mathbf{1} \|_1} \| \dd \|_2}_{=C},  
\end{align*}
where we have also used \eqref{ICR11}. 
This finishes the proof. 
\end{proof}

\section{Numerical experiments}
\label{sec:numerical_experiments}
We discretized the boundary value problem \eqref{eq2} with the finite element method, using first-order Lagrange elements, which yields the stiffness matrix $\mathsf{L}_\epsilon$. The forward matrix associated with \eqref{eq1}-\eqref{eq2} then becomes 
$$\AAA = \mathsf{M}_\partial^{1/2}\mathsf{L}_\epsilon^{-1}\mathsf{M},$$
where $\mathsf{M}$ and $\mathsf{M}_\partial$ denote the standard and "boundary" mass matrices, respectively. With this approach, the local "hat" finite element functions $\{ \psi_i \}$ are directly associated with the standard unit basis vectors $\{ \ee_i \}$.

Motivated by the discussion presented at the beginning of subsection \ref{subsec:regularized_problems}, we performed numerical experiments with 
\begin{equation} \label{eq:num1}
    \min_{0 \leq \xx \leq s} \left\{ \frac{1}{2}\|\PP \xx - \AAA^\dagger \bb\|_2^2 + \alpha \sum_i w_i |x_i| \right\}
\end{equation}
instead of using \eqref{eq:3.1}. Here, the matrices $\PP = \AAA^\dagger \AAA$ and $\WW$ are defined in equations \eqref{eq:3} and \eqref{eq:4}, respectively, and the size of the regularization parameter $\alpha$ is specified in the captions of the figures below. As is mentioned at the end of subsection \ref{subsec:weight_matrix}, truncated SVD, using $20$ singular values, was employed to obtain a computationally reliable approximation of $\AAA^\dagger$. 

We numerically tested whether one can recover $\xx^*$ by solving \eqref{eq:num1} when $\bb = \bb^\dagger = \AAA \xx^*$.  
If not stated otherwise, the true source always had constant unit strength: 
\begin{equation*}
    \xx^* = \sum_\JJ \ee_j.
\end{equation*} 

Since $\PP$ has a nontrivial null space and the regularization term in \eqref{eq:num1} is not strictly convex, \eqref{eq:num1} can potentially have several solutions. This may be "handled" by adding a small amount of quadratic regularization. However, we did not do that, and instead we present the outcome of applying the Alternating Direction Method of Multipliers (ADMM) to \eqref{eq:num1}: Using the zero initial guess and performing $5000$ iterations. We employed Python, in combination with the FEniCS software tool, to realize the numerical experiments.    

According to corollaries \ref{cor:injective} and \ref{lower_upper_constraint}, solving the basis pursuit problem \eqref{IC3} can, under certain circumstances, lead to \textit{exact} recovery of the true source(s), even though $\AAA$ has a large null space. In order to illuminate these results, we committed an "inverse crime" in our two first examples: The same coarse $17 \times 17$ grid was used to represent the source(s) in both the forward and inverse simulations, and the state $u$ was discretized in terms of a $33 \times 33$ mesh. With this setup, the true source can be exactly represented on the grid used in the inverse computations. This leads to unrealistically accurate recoveries in the sense that, in addition to finding the position, size and magnitude, also the exact shape of the source(s) is recovered -- as we will see in examples 1 and 2 below. On the other had, let us emphasize that, even when inverse crimes are committed, the recovery of $\xx^*$ is not trivial because $\AAA$ has a large null space: In example 1 we show that unweighted sparsity regularization fails to provide adequate results.   

In examples 3-5 we avoided inverse crimes: The synthetic boundary data $\bb$ was generated using a grid with $97 \times 97$ nodes for both the source and the state, whereas a $49 \times 49$ grid was used for  these quantities in the inverse solution procedure. 

The unit square $\Omega = (0,1) \times (0,1)$ was used as domain, and in all the cases we ran experiments with both $\epsilon = -1$ and $\epsilon = 1$ in \eqref{eq2}, i.e., with both Helmholtz' equation and the screened Poisson equation. The results obtained with these two state equations were visually indistinguishable, and we therefore, in each example, only present results obtained for one of these model problems: In example 1 and examples 2-5 we present results for $\epsilon=1$ and $\epsilon=-1$, respectively.  

\subsection*{Example 1: Point sources}
In the first example we attempted to locate five single/point sources, see panel (a) in figure \ref{fig:ex1}. Recall that, if assumptions \eqref{IC1}-\eqref{IC2} hold, then Theorem \ref{lower_constraint} guarantees that the solution of the basis pursuit problem \eqref{IC3} has support contained in $\JJ = \textnormal{supp} (\xx^*)$ -- the indices associated with the true point sources. Panel (b) in figure \ref{fig:ex1} shows the solution of \eqref{eq:num1} computed with no upper bound, i.e., $s = \infty$. We observe that all the five point sources, with unit strength, are recovered. This suggests that $\AAA$ is injective on $\{\zz \ | \ \textnormal{supp}(\zz) \subset \JJ \textnormal{ and } 0 \leq \zz\}$, see Corollary \ref{cor:injective}. Indeed, figure \ref{fig:ex1mag} shows that also sources with varying magnitude are perfectly recovered. 

\begin{figure}[H]
    \centering
    \begin{subfigure}[b]{0.45\linewidth}        
        \centering
        \includegraphics[width=\linewidth]{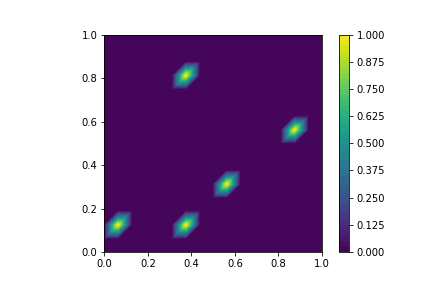}
        \caption{True sources. \break}
    \end{subfigure}\par
    \begin{subfigure}[b]{0.45\linewidth}        
        \centering
        \includegraphics[width=\linewidth]{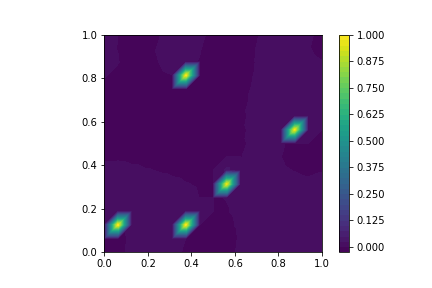}
        \caption{Inverse solution computed with the constraint $0 \leq \xx$.}
    \end{subfigure}\par
    \begin{subfigure}[b]{0.45\linewidth}        
        \centering
        \includegraphics[width=\linewidth]{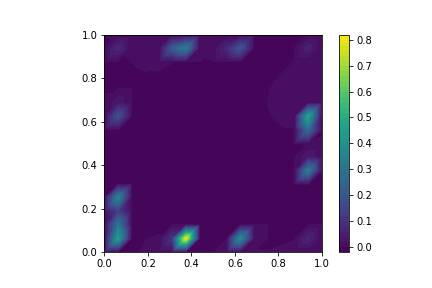}
        \caption{Inverse solution computed with the constraint $0 \leq \xx$ and $\WW = \mathsf{I}$, i.e., unweighted regularization}
    \end{subfigure}    
    \caption{Example 1. Comparison of the true point sources and the inverse solutions computed with weighted and unweighted regularization. The regularization parameter was $\alpha = 10^{-4}$.}
    \label{fig:ex1}
\end{figure}

It was mentioned in the introduction to this section that we committed an inverse crime in this example. However, we emphasis that the the recovery of the present point sources is not trivial, which we illustrate by displaying the solution of 
\begin{equation*} 
    \min_{0 \leq \xx} \left\{ \frac{1}{2}\|\PP \xx - \AAA^\dagger \bb\|_2^2 + \alpha \sum_i |x_i| \right\}
\end{equation*}
in panel (c) in figure \ref{fig:ex1}. That is, we ran the same test without weights in the regularization term. We observe that this procedure fails to recover the true point sources. Also, since the range of the function plotted in panel (c) is a subset of $[0,1]$, it would not help to add an upper constraint, i.e, to employ the box constraint $0 \leq \xx \leq 1$, where we recall that the strength of the true sources is $1$. 


\begin{figure}[h]
    \centering
    \begin{subfigure}[b]{0.45\linewidth}        
        \centering
        \includegraphics[width=\linewidth]{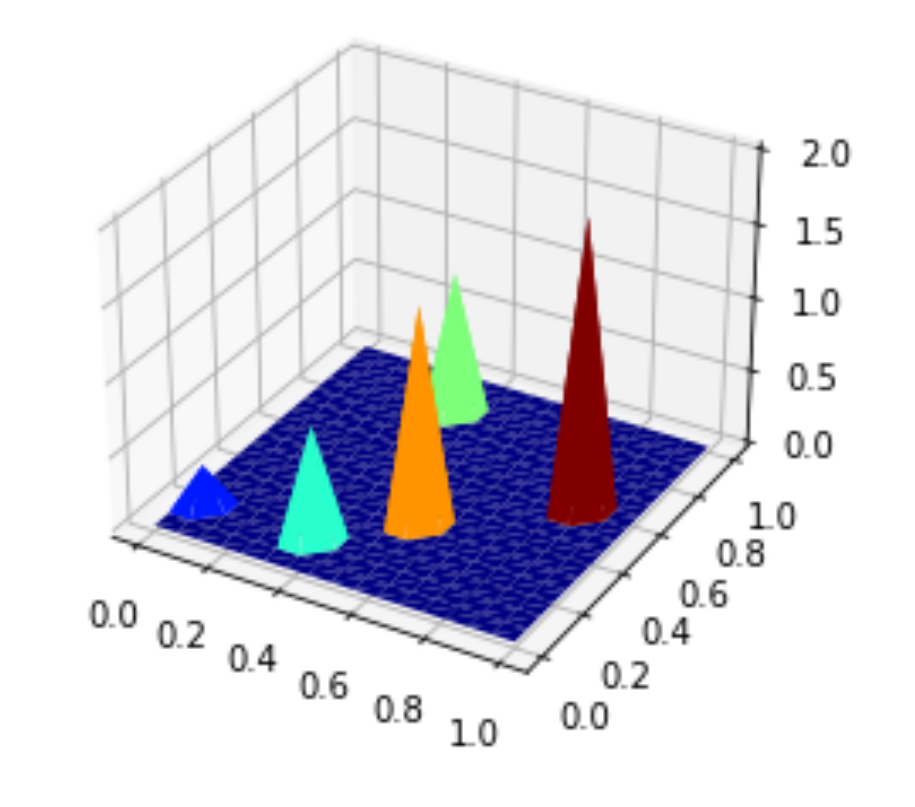}
        \caption{True sources. \break}
    \end{subfigure}
    \begin{subfigure}[b]{0.45\linewidth}        
        \centering
        \includegraphics[width=\linewidth]{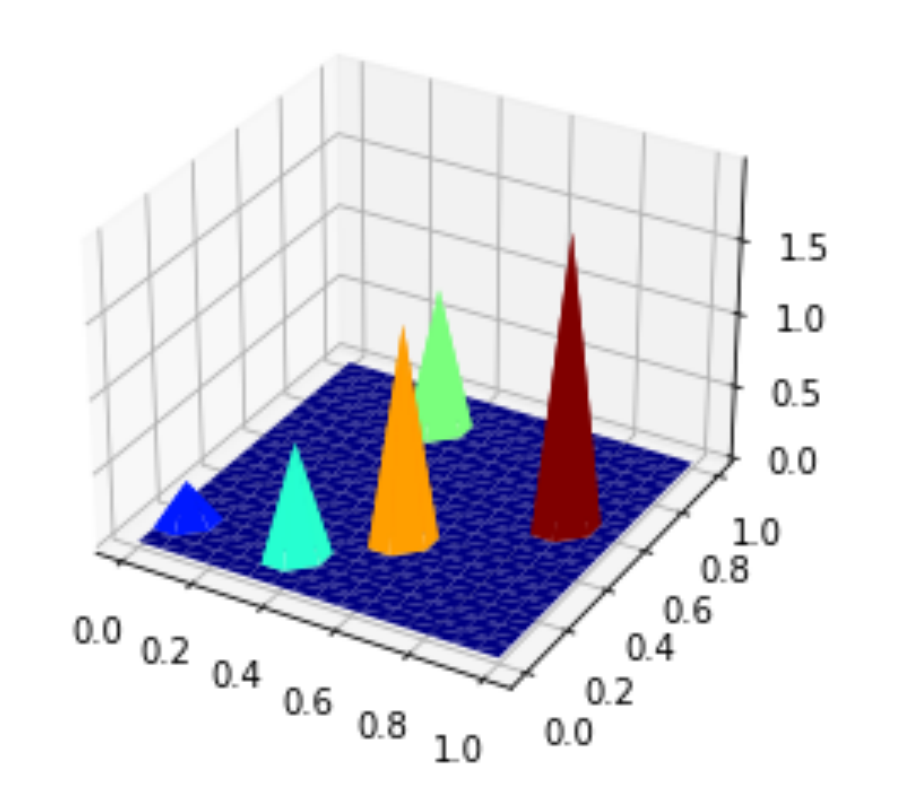}
        \caption{Inverse solution computed with the constraint $0 \leq \xx$.}
    \end{subfigure}
    \caption{Example 1. Comparison of the true point sources with varying magnitude and the inverse solution. The regularization parameter was $\alpha = 10^{-4}$. Note that the same grid was used in the forward and inverse simulations.}
    \label{fig:ex1mag}
\end{figure}

\subsection*{Example 2: Three rectangular sources}
Our second example concerns the recovery of three composite sources. More specifically, we have three separate rectangles, each consisting of several cells, see panel (a) in figure \ref{fig:ex2}. 
The right panel of this figure shows that, when only using the lower constraint $\xx \geq 0$, our method is successful in localizing the three sources, although with underestimated support and overestimated magnitudes. This is in accordance with Theorem \ref{lower_constraint}: The support of the inverse solution is contained in the support of the true sources. However, if the matrix $\AAA$ is not injective on the set $\{\zz \; | \; \textnormal{supp}(\zz) \subset \JJ \textnormal{ and } 0 \leq \zz \leq s\}$, we can not guarantee exact/perfect recovery. For $s = \infty$, the matrix $\AAA$ does not appear to be injective on this set. 

To enforce injectivity, we can set the upper box constraint value $s$ equal to the strength $k$ of the true sources, see Corollary \ref{lower_upper_constraint}. Nevertheless, in practise $k$ is typically unknown and it must somehow be estimated. We will now address this issue.  
\begin{figure}[H]
    \centering
    \begin{subfigure}[b]{0.45\linewidth}        
        \centering
        \includegraphics[width=\linewidth]{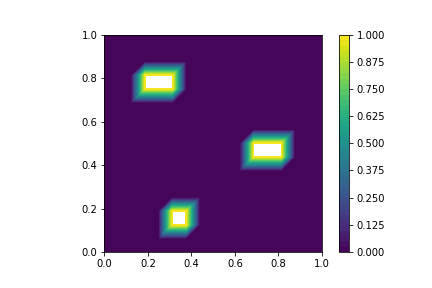}
        \caption{True sources.\break}
    \end{subfigure}
    \begin{subfigure}[b]{0.45\linewidth}        
        \centering
        \includegraphics[width=\linewidth]{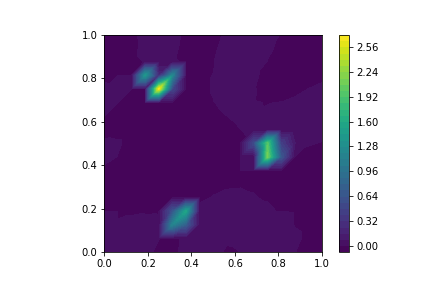}
        \caption{Inverse solution computed with the constraint $0 \leq \xx$.}
    \end{subfigure}\par
    \caption{Example 2. Comparison of the true rectangular sources and the inverse solution. The regularization parameter was $\alpha = 10^{-4}$. Note that the same grid was used in the forward and inverse simulations.}
    \label{fig:ex2}
\end{figure}

\subsubsection*{Determining the strength}
In connection with Corollary \ref{strength} we presented an algorithm for identifying the magnitude $k$ of a constant strength source. Analogously, for the regularized problem \eqref{ICR7.9}, one can use inequalities \eqref{ICR8}-\eqref{ICR9} to approximately determine $k$:

\noindent We can combine \eqref{ICR8}-\eqref{ICR9} to deduce that a solution $\yy$ of \eqref{eq:num1} must satisfy 
\begin{equation*}
    \|\WW\yy\|_1 \sim 
    \begin{cases}
      \left( 1 + \frac{(1-\gamma)(k-s)}{k \gamma} \right) \| \WW \xx^*\|_1, \ &s \in [0, k) \cap S, \\ 
      \|\WW\xx^*\|_1, \ &s \geq k,
    \end{cases}
\end{equation*}
provided that $\alpha > 0$ is rather small. 
Thus, we expect that the $\|\WW\cdot\|_1$-norm of the inverse solution $\yy$ is (relatively) constant for $s > k$. Furthermore, for $s < k$, the size of $\|\WW\cdot\|_1$ should decrease approximately linearly as a function of $s$. Hence, when plotting $\|\WW\yy\|_1$ as a function of the upper bound $s$, we should see an "L-shaped" curve, which is indeed what figure \ref{fig:ex2lcurve} displays. Thus, we select the upper box constraint to be the s-value associated with the vertex of this "L-curve".   

With this strategy we obtained $s=1$ for the present model problem, which is indeed the strength of the true sources. The result is displayed in figure \ref{fig:ex2choices}c). All the three rectangles are exactly identified. 

Figure \ref{fig:ex2choices} also displays the inverse solutions computed with various other values of $s$. In panels (a) and (b), $s > 1 = k$ and, as expected, the support is a subset of $\JJ = \textnormal{supp}(\xx^*)$, see Theorem \ref{lower_constraint}. In panels (d)-(f), $s < 1 = k$, which renders the true source infeasible. For this particular example, an erroneous choice of $s$ is rather "forgiving" in the sense that the support of the inverse solution is either contained inside the support of the true source (when $s$ is chosen too large) or contained in a region surrounding the support of the true source (when $s$ is chosen too small).  
\begin{figure}[h]
    \centering
    \includegraphics[width=0.6\linewidth]{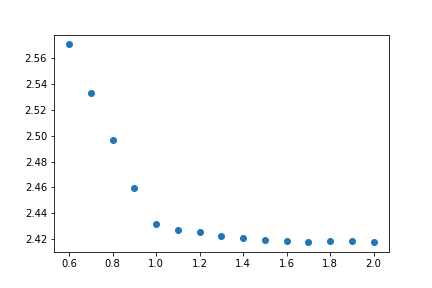}
    \caption{Example 2. Plot of the $\|\WW\cdot\|_1$-norm of the solution $\yy = \yy(s)$ of \eqref{eq:num1} for different values of $s$. Here, $\alpha = 10^{-4}.$}
    \label{fig:ex2lcurve}
\end{figure}

\subsection*{Example 3: The square, the circle and the triangle}
In the remaining examples we avoided inverse crimes: A finer grid was employed in the forward simulations than in the inverse solution procedures. This yields a more realistic setup, but our analysis is not applicable and exact recovery is not possible.

Using this setup, can we still recover the size and position of the sources, but not necessarily the shape? To illuminate the performance of the proposed method under these circumstances, we considered three sources with different shape: a square, a rectangle and a circle, see figure \ref{fig:ex3}(a).

The solution computed with only a lower box constraint $\xx \geq 0$ is shown in panel (b), and we observe that 
the support of the inverse solution is a subset of the support of the true sources. When we also apply the upper box constraint, then the size of the sources are roughly recovered, but the exact geometry is "lost", as seen in panel (c). 

Note that the upper box constraint $s$ equaled $1.2$ in this example. This value was determined manually by the "L-curve" procedure described in the previous example. More precisely, panel (a) in figure \ref{fig:ex3lcurve} shows the $\|\WW\cdot\|_1$-norm of the inverse solution $\yy = \yy(s)$ as a function of $s$, and panel (b) displays the corresponding finite difference approximation of the "derivative" of this curve. Compared with the idealized example considered above, cf. figure \ref{fig:ex2lcurve}, the task of selecting an appropriate value for $s$ is now much more challenging. (Similar observations were made in connection with examples 4 and 5 below.) The value $1.2$, for $s$, was chosen because the dotted-curve in panel (b) in figure \ref{fig:ex3lcurve} has a significant "jump" at this location. 

\begin{figure}[H]
    \centering
    \begin{subfigure}[b]{0.45\linewidth}        
        \centering
        \includegraphics[width=\linewidth]{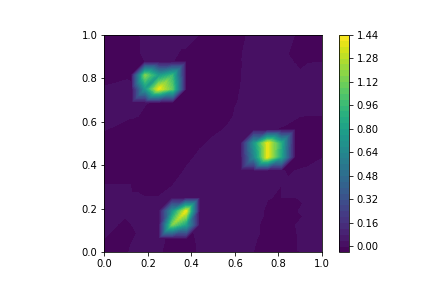}
        \caption{s = 1.4}
    \end{subfigure}
    \begin{subfigure}[b]{0.45\linewidth}        
        \centering
        \includegraphics[width=\linewidth]{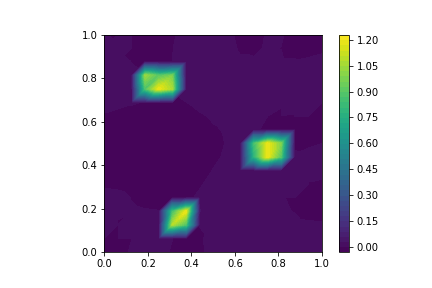}
        \caption{s = 1.2}
    \end{subfigure}\par
    \begin{subfigure}[b]{0.45\linewidth}        
        \centering
        \includegraphics[width=\linewidth]{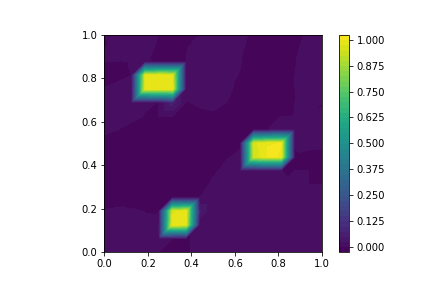}
        \caption{s = 1.0}
    \end{subfigure}
    \begin{subfigure}[b]{0.45\linewidth}        
        \centering
        \includegraphics[width=\linewidth]{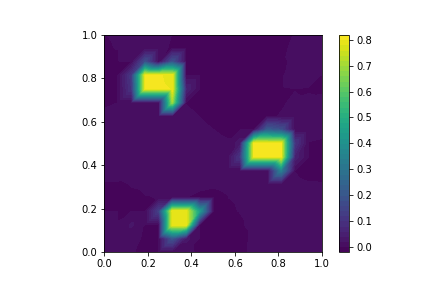}
        \caption{s = 0.8}
    \end{subfigure}\par    
    \begin{subfigure}[b]{0.45\linewidth}        
        \centering
        \includegraphics[width=\linewidth]{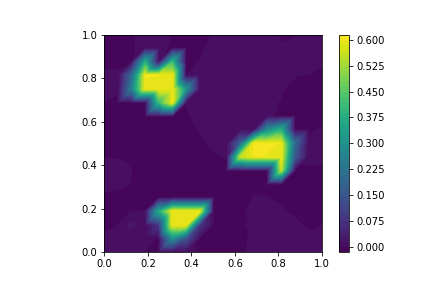}
        \caption{s = 0.6}
    \end{subfigure}
    \begin{subfigure}[b]{0.45\linewidth}        
        \centering
        \includegraphics[width=\linewidth]{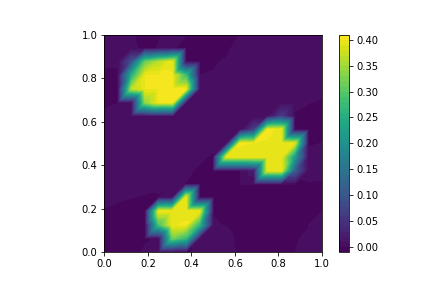}
        \caption{s = 0.4}
    \end{subfigure}
    \caption{Example 2. Inverse solutions computed with different values for the upper box constraint $s$. The regularization parameter was $\alpha = 10^{-4}$, and the true rectangular sources are displayed in figure \ref{fig:ex2}(a). The same grid was used in the forward and inverse simulations.}
    \label{fig:ex2choices}
\end{figure}

\begin{figure}[H]
    \centering
    \begin{subfigure}[b]{0.45\linewidth}        
        \centering
        \includegraphics[width=\linewidth]{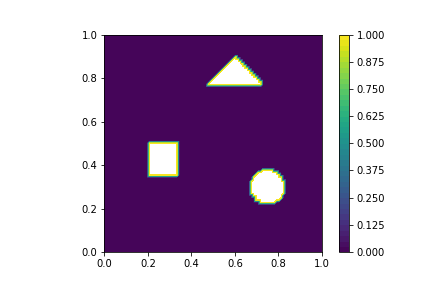}
        \caption{True sources.}
    \end{subfigure}\par
    \begin{subfigure}[b]{0.45\linewidth}        
        \centering
        \includegraphics[width=\linewidth]{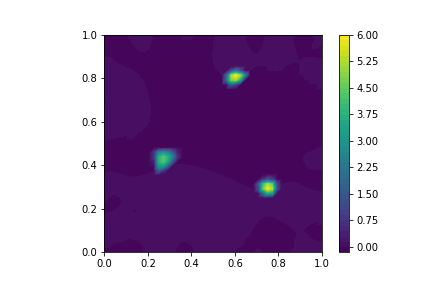}
        \caption{Inverse solution computed with the constraint $0 \leq \xx$.}
    \end{subfigure}\par
    \begin{subfigure}[b]{0.45\linewidth}        
        \centering
        \includegraphics[width=\linewidth]{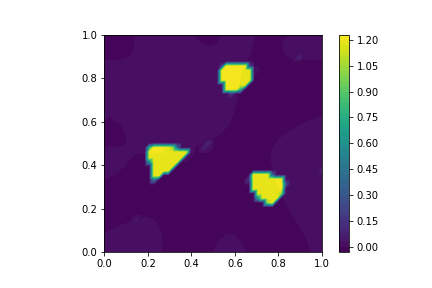}
        \caption{Inverse solution computed with the box constraint $0 \leq \xx \leq 1.2$.}
    \end{subfigure}\par
    \caption{Example 3. Comparison of the true sources and the inverse solutions. The regularization parameter was $\alpha = 10^{-4}$.}
    \label{fig:ex3}
\end{figure}

\begin{figure}[H]
    \centering
    \begin{subfigure}[b]{0.45\linewidth}        
        \centering
        \includegraphics[width=\linewidth]{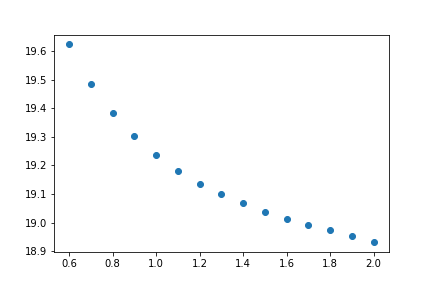}
        \caption{$\|\WW\yy(s)\|_1$}
    \end{subfigure}
    \begin{subfigure}[b]{0.45\linewidth}        
        \centering
        \includegraphics[width=\linewidth]{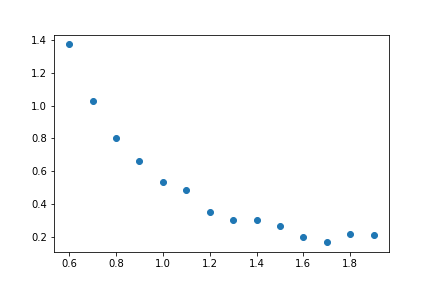}
        \caption{$\frac{\|\WW\yy(s)\|_1 - \|\WW\yy(s')\|_1}{s - s'}$}
    \end{subfigure}\par
    \caption{Example 3. The $\| \WW \cdot \|_1$-norm of the solution $\yy=\yy(s)$ of \eqref{eq:num1}, as a function of $s$, and the associated difference approximation of its "derivative". Here, $\alpha = 10^{-4}.$}
    \label{fig:ex3lcurve}
\end{figure}

\subsection*{Example 4: A horseshoe-shaped source}
In examples 4 and 5 we added noise to the data. More specifically, we computed
\begin{equation*}
   \bb = \AAA\xx^* + \tau\boldsymbol{\rho},
\end{equation*}
where $\tau$ is a scalar and $\boldsymbol{\rho}$ is a vector containing normally distributed numbers with zero mean and standard deviation $1$. We refer to the ratio
\begin{equation*}
    \frac{\tau}{\max\{\bb\} - \min\{\bb\}}
\end{equation*}
as the noise level (in percentage). Furthermore, we selected the regularization parameter $\alpha$ based on Morozov's discrepancy principle.

The first example with noisy data concerns a horseshoe-shaped source. A precise recovery of this geometry appears to be more challenging than the cases considered above because the source now "partially encloses" a piece of the domain, see panel (a) in figure \ref{fig:ex4a}. This suggests that it might be more difficult to determine the boundary of the "inner" part of the horseshoe. 

Panels (b)-(d) in figure \ref{fig:ex4a} show that using only the lower constraint $\xx \geq 0$ results in a highly overestimated magnitude. Nevertheless, without noise, the support of the solution is a subset of the support of the true source. When noise is present, the branches (vertical bars) of the horseshoe is less precisely recovered.  

Figure \ref{fig:ex4b} contains results computed with the box constraint $0 \leq \xx \leq 1$, and we thus used the exact/true upper constraint $s=1$. In panel (b), the case without noise, we get a rather good approximation of the horseshoe. For the problem with 1\% noise, the branches are "bent outwards", whereas they are almost completely "gone" for case with 5\% noise.
\begin{figure}[H]
    \centering
    \begin{subfigure}[b]{0.4\linewidth}        
        \centering
        \includegraphics[width=\linewidth]{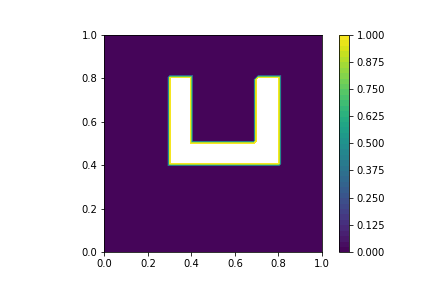}
        \caption{True source.}
    \end{subfigure}
    \begin{subfigure}[b]{0.4\linewidth}        
        \centering
        \includegraphics[width=\linewidth]{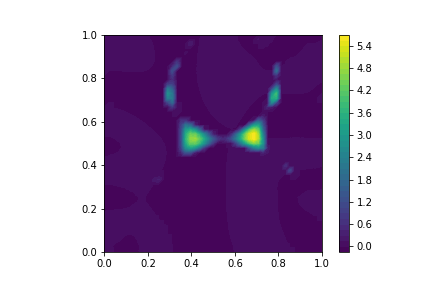}
        \caption{0\% noise ($\alpha = 10^{-4}$).}
    \end{subfigure}\par
    \begin{subfigure}[b]{0.4\linewidth}        
        \centering
        \includegraphics[width=\linewidth]{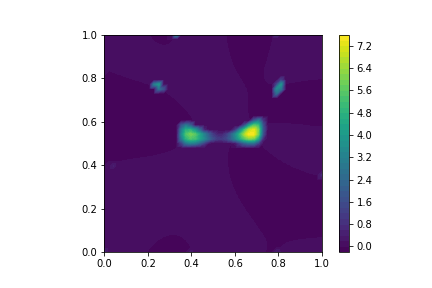}
        \caption{1\% noise ($\alpha = 0.01$).}
    \end{subfigure}
    \begin{subfigure}[b]{0.4\linewidth}        
        \centering
        \includegraphics[width=\linewidth]{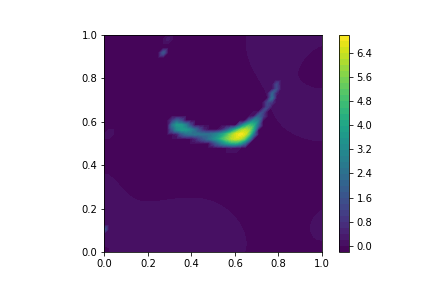}
        \caption{5\% noise ($\alpha = 0.15$).}
    \end{subfigure}    
    \caption{Example 4. Comparison of the true source and the inverse solutions computed with the  constraint $0 \leq \xx$ for different noise levels. }
    \label{fig:ex4a}
\end{figure}

\begin{figure}[H]
    \centering
    \begin{subfigure}[b]{0.4\linewidth}        
        \centering
        \includegraphics[width=\linewidth]{imgs/helmholtz/horseshoe/horseshoeSource.png}
        \caption{True source.}
    \end{subfigure}
    \begin{subfigure}[b]{0.4\linewidth}        
        \centering
        \includegraphics[width=\linewidth]{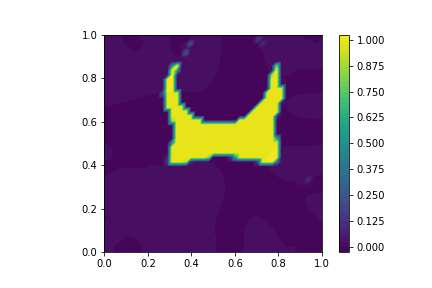}
        \caption{0\% noise ($\alpha = 10^{-4}$).}
    \end{subfigure}\par
    \begin{subfigure}[b]{0.4\linewidth}        
        \centering
        \includegraphics[width=\linewidth]{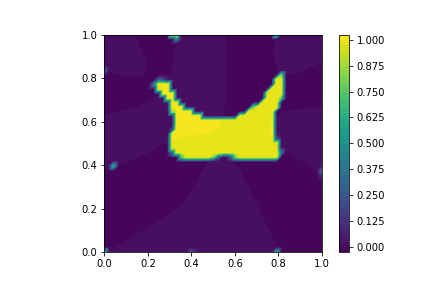}
        \caption{1\% noise ($\alpha = 0.01$).}
    \end{subfigure}
    \begin{subfigure}[b]{0.4\linewidth}        
        \centering
        \includegraphics[width=\linewidth]{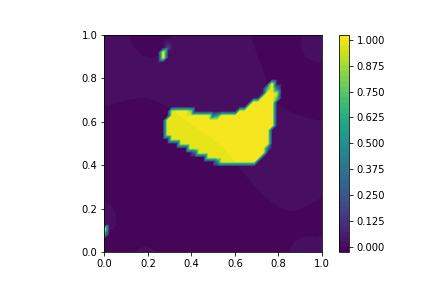}
        \caption{5\% noise ($\alpha = 0.15$).}
    \end{subfigure}    
    \caption{Example 4. Comparison of the true source and the inverse solutions computed with the constraint $0 \leq \xx \leq 1$ for different noise levels. }
    \label{fig:ex4b}
\end{figure}

\subsection*{Example 5: A hollow rectangle}
In the final example we tried to recover a source shaped like a hollow rectangle (picture frame), see figure \ref{fig:ex5}(a). This source was deliberately constructed such that the recovery becomes very challenging. That is, it seems unrealistic to detect the hole/cavity inside the rectangle when only boundary data is available.

Indeed, panels (b)-(d) in figure \ref{fig:ex5} show that our methodology fails to recover the hole inside the rectangle. On the other hand, the algorithm is quite successful in determining the "outer" boundary of the true source for the cases without noise and with 1\% noise. When the data contains 5\% noise, both the position and the size are reasonably approximated, but the shape is too "circular".  
\begin{figure}[h]
    \centering
    \begin{subfigure}[b]{0.4\linewidth}        
        \centering
        \includegraphics[width=\linewidth]{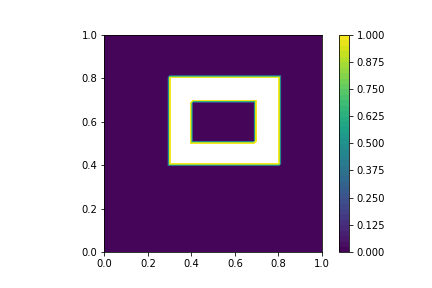}
        \caption{True source.}
    \end{subfigure}
    \begin{subfigure}[b]{0.4\linewidth}        
        \centering
        \includegraphics[width=\linewidth]{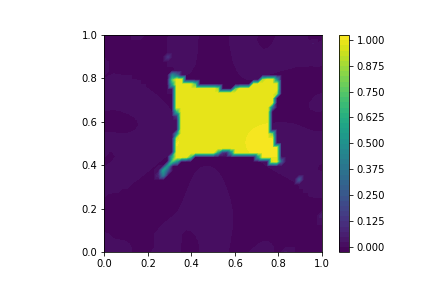}
        \caption{0\% noise ($\alpha = 10^{-4}$).}
    \end{subfigure}\par
    \begin{subfigure}[b]{0.4\linewidth}        
        \centering
        \includegraphics[width=\linewidth]{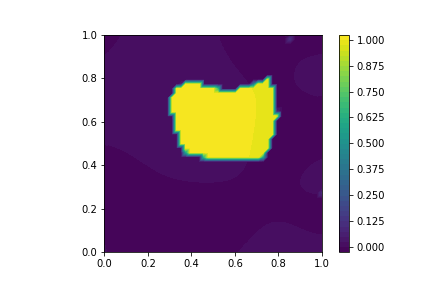}
        \caption{1\% noise ($\alpha = 0.05$).}
    \end{subfigure}
    \begin{subfigure}[b]{0.4\linewidth}        
        \centering
        \includegraphics[width=\linewidth]{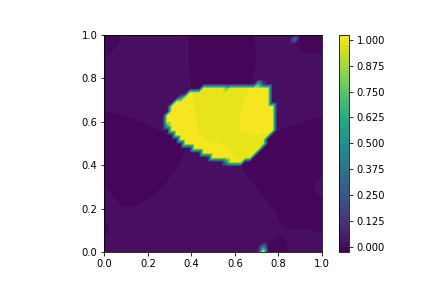}
        \caption{5\% noise ($\alpha = 0.2$).}
    \end{subfigure}    
    \caption{Example 5. Comparison of the true source and the inverse solutions computed with the box constraint $0 \leq \xx \leq 1$ for different noise levels. }
    \label{fig:ex5}
\end{figure}

\section{Concluding remarks}
\label{sec:concluding_remarks}
The main purpose of this paper has been to address the non-uniqueness issue arising in connection with some inverse problems. We have explored this matter for the task of identifying the source term in an elliptic PDE from boundary data. Despite the presence of a large null space, we have shown that the use of weighted sparsity regularization and box constraints can lead to rather accurate recovery. In fact, in an idealized setting, exact reconstruction might be possible. We demonstrated this behavior for two test problems, where we, for the sake of exemplifying the exact recovery, had to commit the "inverse crime" of using identical grids for the forward and inverse computations. Under less ideal conditions, i.e., without "inverse crimes", we still observe that the position and roughly the size of the sources can be computed. That is, the shape can not be perfectly reconstructed -- which is as one would expect. 

For the sake of generality, we developed the methodology and the analysis in terms of Euclidean spaces. Our approach can thus be applied to a broad range of inverse problems, assuming that one seeks sparse solutions. We presented mathematical results for both the basis pursuit formulation of the problem and for its regularized version. It remains to develop an infinite dimensional counterpart to the theory presented in this paper and to test the performance of the methodology on real-world data. 

This investigation was motivated by the fact that standard (unweighted) sparsity regularization do not provide adequate results for the source identification task considered in this paper.

\bibliographystyle{abbrv}
\bibliography{references}

\end{document}